\documentclass{sn-jnl}% Math and Physical Sciences Reference Style
\geometry{textheight=220mm}
\usepackage{hyperref}
\usepackage{amssymb}
\usepackage{amsmath}
\usepackage{amsthm}

% for commuting diagrams
\usepackage{tikz-cd}

\newtheorem{theorem}{Theorem}[section]% meant for sectionwise numbers
%% optional argument [theorem] produces theorem numbering sequence instead of independent numbers for Proposition
\newtheorem{proposition}[theorem]{Proposition}% 
\newtheorem{fact}[theorem]{Fact}%.
\newtheorem{lemma}[theorem]{Lemma}%
\newtheorem{corollary}[theorem]{Corollary}%

\begin{document}

\title[Article Title]{Enumeration formulas for (3, 6)-fullerenes}

% Define the ORCID iD command for each author separately. 
\newcommand{\orcidA}{0000-0001-7066-4353}

\author*[1]{\fnm{Linda} \sur{Green}}\email{greenl@email.unc.edu}

\author[1]{\fnm{Yadunand} \sur{Sreelesh}}\email{yadu21@unc.edu}

\author[1]{\fnm{Saanvi} \sur{Arora}}\email{saanvi@email.unc.edu}

\affil[1]{\orgname{University of North Carolina at Chapel Hill}, \orgaddress{\city{Chapel Hill}, \state{NC},  \country{USA}}}

\abstract{A $(3, 6)$-fullerene is a cubic planar graph whose faces all have 3 or 6 sides. We give an exact enumeration of $(3, 6)$-fullerenes with $V$ vertices. We also enumerate $(3, 6)$-fullerenes with mirror symmetry, with 3-fold rotational symmetry, and with both types of symmetry. The resulting formulas are expressed in terms of the prime factorization of $V$. }

%% Keywords
\keywords{fullerene, polyhedron, hexagonal tiling, trivalent graph, planar graph, two-faced map}

\maketitle

\section{Introduction}

\label{sec-intro}

A \emph{$(3, 6)$-fullerene} is a connected, cubic, planar graph whose faces all have three or six sides, with a fixed planar embedding, up to orientation-preserving homeomorphisms of the plane. We will say that two  $(3, 6)$-fullerenes are equivalent if they are not only isomorphic as graphs but if there is also an orientation-preserving homeomorphism of the plane that is a graph isomorphism. Thus, left-handed and right-handed $(3,6)$-fullerenes are considered distinct.  We will sometimes consider a $(3,6)$-fullerene to be embedded in the sphere instead of the plane, using the one-point compactification of the plane. 
 We refer to faces with three sides as  ``triangles'' and faces with six sides as ``hexagons'', even though these faces may not have straight edges and may be unbounded.  Note that the assumption that all faces are triangles or hexagons ensures that the graphs are simple with no edge-loops. 
 
 All $(3,6)$-fullerenes are either 2-connected or 3-connected \cite{yang2012hexagonal}. The only $(3,6)$-fullerenes that are not 3-connected are those whose planar embedding is composed of two triangles, surrounded by one or more nested pairs of hexagons that meet along opposite sides, with a final pair of adjacent triangles on the outside \cite{deza2005zigzag, yang2012hexagonal}.  These 2-connected planar graphs are denoted by $G_n$ in Section 4 of \cite{deza2005zigzag} and referred to as ``godseyes'' in Section 2 of \cite{green2024polyhedra}.  See Figure~\ref{fig-godseye}. In our counts of $(3,6)$-fullerenes, we include godseyes as well as 3-connected graphs, because the proofs and count formulas are simpler using that convention. Some other sources, for example Deza and Detour \cite{deza2005zigzag}, exclude godseyes in their counts. This difference in convention results in our counts being one higher than those of Deza and Detour \cite{deza2005zigzag}, as explained further in Section~\ref{sec-examples}.

 \begin{figure}[ht]
       \centering
    \includegraphics[height = 5 cm]{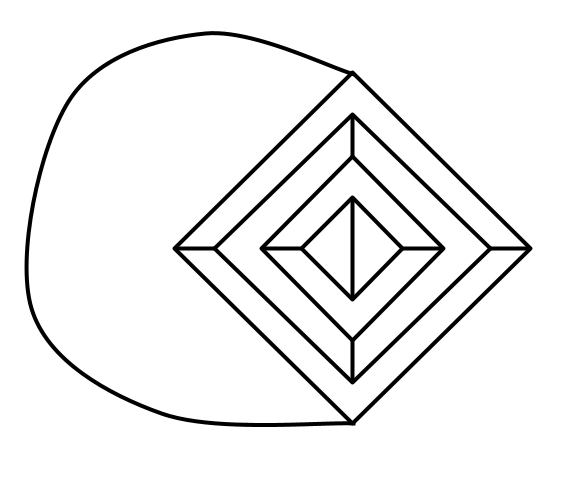}
    \caption{A godseye is the only type of $(3,6)$-fullerene that is not 3-connected. }
\label{fig-godseye}
\end{figure}

 The $(5,6)$-fullerenes, whose faces are all pentagons and hexagons, have been widely studied as mathematical representations of carbon molecules. P. Engle and P. Smillie \cite{engel2025exact} gave explicit formulas for the number of $(5,6)$-fullerenes in terms of the number of vertices. The (3,6)–fullerenes have also received attention due to their similarity to $(5,6)$-fullerenes, by B. Gr\"unbaum and T.S. Motzkin \cite{grunbaum_motzkin_1963}, P.W. Fowler, P.E. John, and H. Sachs \cite{fowler19983} and \cite{JOHN20092663}, M. Deza and M. Dutour \cite{deza2005zigzag},  M. DeVos, L. Goddyn, B. Mohar, and R. \u{S}\'{a}mal \cite{DEVOS2009358}, and others. % A. Reza Ashrafi and Z. Mehranian \cite{ashrafi2013topological},
Efficient algorithms exist for generating $(3,6)$-fullerenes and thereby establishing exact counts of (3, 6)-fullerenes for any number of vertices  \cite{brinkmann1997constructive}, \cite{brinkmann2010cage}  \cite{brinkmann2012generation}, \cite{brinkmann2003construction}, and \cite{brinkmann2007fast}. Counts for up to 512 vertices are given in Table 5 of \cite{deza2005zigzag}. This paper extends previous work by providing formulas for the number of $(3,6)$-fullerenes with $V$ vertices in terms of the prime factorization of $V$. 

This work is related to work by J. Huijben \cite{huijben2016tetrahedral}, who independently found versions of formulas  in Theorem~\ref{thm-delta}, Theorem~\ref{prop-mu}, and Lemma~\ref{lem-$(3,6)$-fullereneGraphCount} of the current paper. However, the current work is novel in its approach, because it uses a classification of $(3,6)$-fullerenes in terms of triples of numbers, which allows symmetry properties to be translated into simple equations in modular arithmetic. Unlike Huijben’s thesis, which uses Eisenstein integers and ideal counting for tetrahedral triangulations, the current work uses a more elementary approach that involves only basic combinatorial and arithmetic arguments. The current work gives explicit formulas for counts in places where Huijben's thesis gives recursive formulas.

 In this paper, Section~\ref{sec-background} reviews background information from \cite{green2024polyhedra}.  It describes how each $(3,6)$-fullerene can be described by a triple of numbers, called a \emph{signature}. Each $(3,6)$-fullerene has three signatures, and these three signatures are either all distinct, or they are all the same, in which case they are called \emph{coinciding signatures}.  Section~\ref{sec-symmetry} establishes that $(3,6)$-fullerenes have coinciding signatures if and only they are equivalent to embeddings on a sphere with 3-fold rotational symmetry. Section~\ref{sec-equation}  converts the problem of finding $(3,6)$-fullerenes with coinciding signatures to the problem of finding solutions to the equation $x^2 + x + 1 \equiv 0 \pmod{n}$, where $n$ is a factor of the number of vertices.  Section~\ref{sec-(3,6)-fullereneCounts} uses the number of solutions to the equation $x^2 + x + 1 \equiv 0 \pmod n$ to find the number of $(3,6)$-fullerenes with $V$ vertices, as well as the number with 3-fold rotational symmetry. Section~\ref{sec-mirror} gives equivalent conditions for a $(3,6)$-fullerene to have mirror symmetry. Section~\ref{sec-mirrorCounts} gives formulas to count the number of $(3,6)$-fullerenes with $V$ vertices with mirror symmetry, as well as the number with both mirror and 3-fold rotational symmetry. Section~\ref{sec-(3,6)-fullereneGraphs} gives the number of graph isomorphism classes of $(3,6)$-fullerenes with $V$ vertices. The number of graph isomorphism classes is smaller than the number of $(3,6)$-fullerenes since two $(3,6)$-fullerenes can be isomorphic as graphs but not equivalent to each other if they are mirror images. Section~\ref{sec-examples} offers sample computations and a table of counts. With the exception of parts of Sections \ref{sec-(3,6)-fullereneGraphs} and \ref{sec-examples}, all  enumerations are of oriented embeddings, i.e. chiral map-classes, not just abstract graphs.

%%%%%%%%%%%%%%%%%%%%%%%%%%%%%%%%%%%%%%%%%%%%%%%%%%%
 \section{Background and formulas}
 \label{sec-background}
This section reviews background from previous work \cite{green2024polyhedra}, including a topological description of $(3,6)$-fullerenes as a quotient space and an algebraic characterization of $(3,6)$-fullerenes in terms of triples of numbers. 

Consider the quotient space of a hexagonal tiling of the
plane, under a group of isometries generated by $180^\circ$ rotations, where the centers of rotation lie on the centers of hexagons and form the vertices of a superimposed 
parallelogram grid. See Figures~\ref{fig:cover} and \ref{fig:quotient} for examples. The fundamental domain of this group action consists of two adjacent parallelograms, and the quotient space is homeomorphic to a sphere. The edges and vertices of the hexagonal tiling quotient to edges and vertices of a $(3,6)$-fullerene embedded in that sphere.  The hexagons of the hexagonal tiling of the plane that contain centers of rotation are called \emph{special hexagons}. Each special hexagon covers a triangular face of the $(3,6)$-fullerene, and every other hexagon in the hexagonal tiling covers a hexagonal face of the $(3,6)$-fullerene. This quotient space construction is related to construction 4.1 of \cite{DEVOS2009358}, where $(3, 6)$-fullerenes are constructed by folding a triangle whose vertices lie on a triangular grid and taking its dual.   

In fact, every $(3,6)$-fullerene, with its planar embedding extended from the plane to the sphere, is homeomorphic via an orientation-preserving homeomorphism of the sphere to such a quotient space. See Theorem 2 of \cite{green2024polyhedra}, which relies on an analysis by Gr\"{u}nbaum and Motzkin in \cite{grunbaum_motzkin_1963}. 

\begin{figure}[ht]
       \centering
    \includegraphics[height = 5 cm]{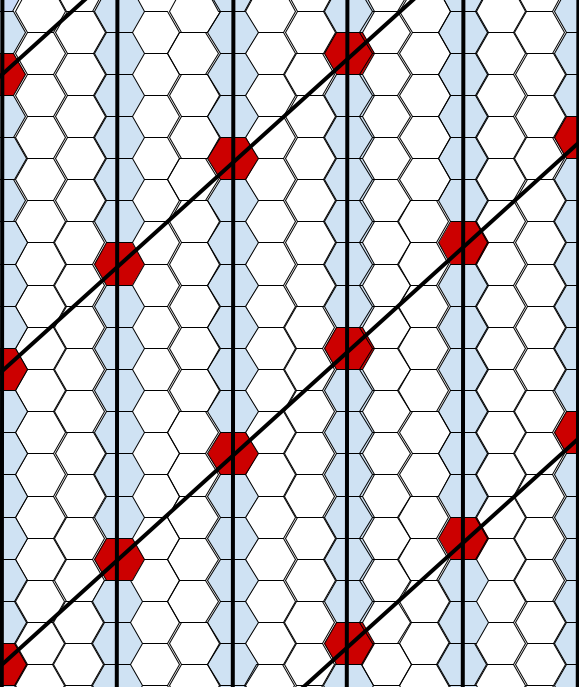}
    \hspace{1 cm}
    \includegraphics[height = 5 cm]{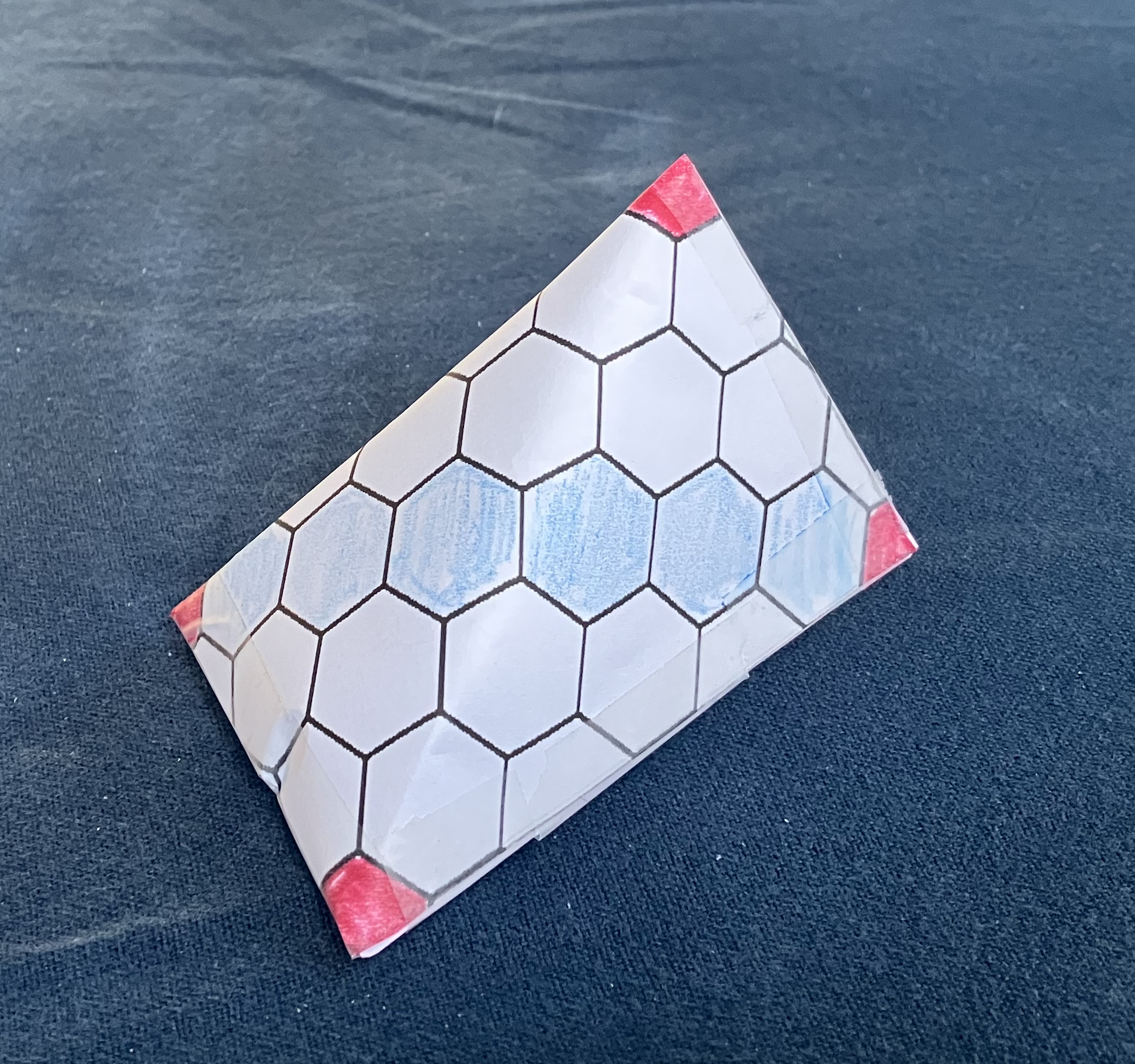}
    \caption{The left figure shows the hexagonal grid that covers a $(3,6)$-fullerene with signature $(6,2,1)$. Special hexagons are shaded dark red (or dark gray if printed in grayscale). The light blue (or light gray) hexagons, together with the special hexagons, form spines of length 6. The vertices of a superimposed parallelogram grid lie in the centers of special hexagons. The right figure shows the $(3,6)$-fullerene formed as the orbifold quotient of the hexagonal tiling.  }
\label{fig:cover}
\end{figure}

\begin{figure}[ht]
\centering
\includegraphics[height = 4 cm]{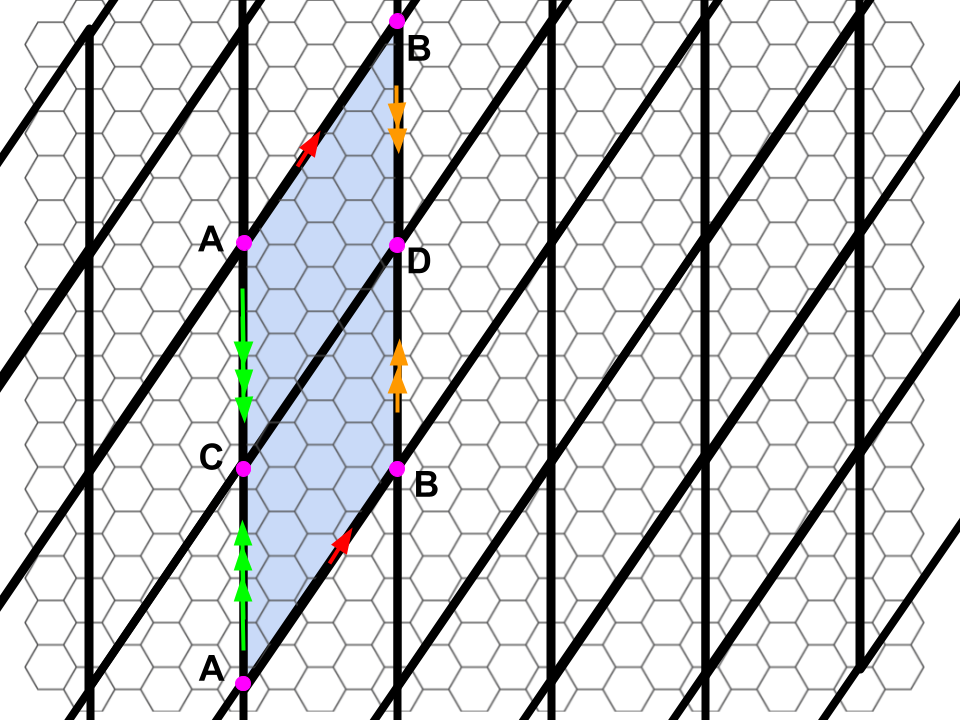} \hspace{0.5 cm} \includegraphics[height = 4 cm]{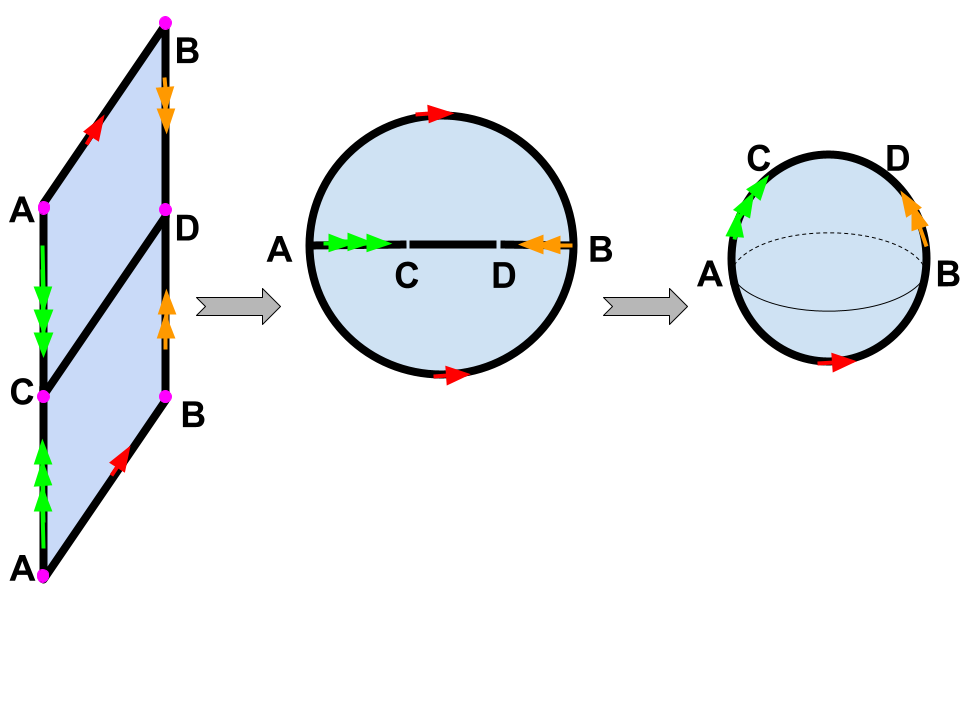}
    \caption{The quotient of a hexagonal tiling under a group of isometries generated by $180^\circ$ rotations around a superimposed parallelogram grid. The quotient space is a topological sphere. The hexagonal grid quotients to a $(3,6)$-fullerene with signature $(4, 3, 3)$ embedded in that sphere.}
    \label{fig:quotient}
\end{figure}

Suppose that the hexagonal tiling is positioned so that the hexagons lie in vertical columns as in Figure~\ref{fig:cover}. We will call the columns that do not contain special hexagons \emph{belt columns}. We will call the columns containing special hexagons \emph{spine columns} and say that two spine columns are consecutive if there are no other spine columns between them.  For example, the hexagonal tiling in Figure~\ref{fig:cover} has two belt columns between consecutive spine columns. A \emph{spine} is a collection of hexagons in a spine column between two special hexagons, including the special hexagons at each end. The length of the spine is the number of hexagons between the two special hexagons, not including the special hexagons on each end. For example, in Figure~\ref{fig:cover}, spines of length 6 are lightly shaded with darkly shaded special hexagons on each end.

A \emph{signature} for a $(3,6)$-fullerene is a triple $(s, b, f)$, where $s \geq 0$ is the length of a spine and $b \geq 0$ is the number of belt columns that lie between two consecutive spine columns. The number $f$, called the offset, defined mod ${s+1}$, indicates how the special hexagons in one spine column are shifted compared to those in a consecutive spine column. Specifically, if we translate a special hexagon along hexagons, $b+1$ columns in the (approximately) southwest to northeast direction, then the offset $f$ is the number of hexagons below a special hexagon that it lands. For example, in 
Figure~\ref{fig:cover}, the offset is 1.

Spines, belts, and offset are defined here in terms of the hexagon tiling of the plane that covers a $(3,6)$-fullerene. But they can also be defined in terms of the $(3,6)$-fullerene itself formed as quotient of this hexagonal tiling. In the $(3,6)$-fullerene itself, the triangles are quotients of the special hexagons at the corners of each parallelogram in the parallelogram grid. The spines are strings of hexagons capped by these triangles, and are the quotients of the spines in the hexagonal tiling. Belts are circuits of hexagons that surround spines and are the quotients of belt columns. Offset can be defined solely in terms of how one spine is rotated with respect to the other in the $(3,6)$-fullerene. See \cite{green2024polyhedra} for details.

The number of triangles in a $(3,6)$-fullerene is always four. This follows from a standard Euler characteristic argument. See, for example, \cite{grunbaum_motzkin_1963}.

Let $h$ be the number of hexagons in the $(3,6)$-fullerene with signature $(s, b, f)$ that is the quotient space of the hexagonal tiling and let $V$ be the number of vertices. To count $h$, note that in the quotient space, there are two spines, each containing $s$ hexagons, and there are $b$ belts, each containing $2(s+1)$ hexagons. Therefore,
\begin{flalign}
    \label{eqn-h}
    h = 2sb + 2s + 2b
\end{flalign} 
To count the number of vertices, note that $V = \dfrac{6h + 3\cdot 4}{3}$, since each hexagon has six vertices, each of the four triangles has three vertices, and there are three faces around each vertex. Therefore, substituting Equation~\ref{eqn-h} for $h$ yields the following equation. 
\begin{flalign}
    \label{eqn-V}
    V = 4(s+1)(b+1)
\end{flalign}
Note that since $s$ and $b$ are integers, $V$ is always a multiple of 4.

Instead of using vertical
strings of hexagons to make spine columns, we could build spine columns by setting off from a special
hexagon at an angle $60^\circ$ clockwise from due north (approximately the southwest (SW) to northeast (NE) direction) or at an angle $120^\circ$ clockwise from due north (approximately northwest (NW) to southeast (SE) direction).  See Figure~\ref{fig:alternativeSpinesUpstairs}. We will also refer to strings of hexagons in these alternate directions as belt columns, spine columns, and spines.

\begin{figure}[ht]
    \centering
    \includegraphics[ height=6cm]{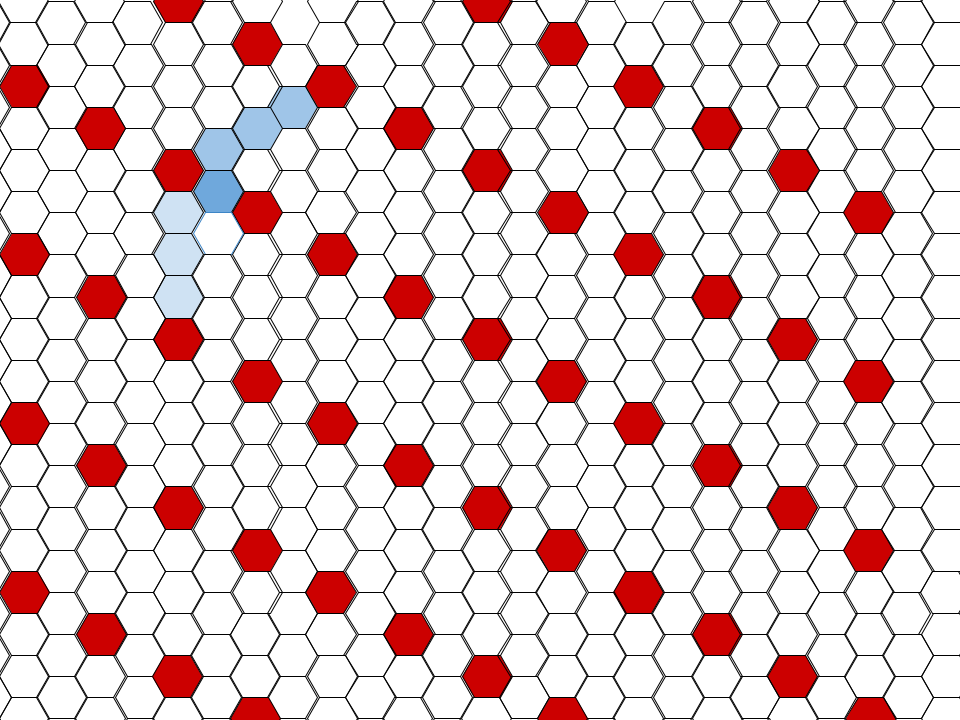}
    \caption{Alternative decompositions into spines in the hexagonal cover. Three spines are shaded in light blue (or light gray if printed in grayscale) with dark red special hexagons on either end (dark gray if printed in grayscale). These spines have lengths 3, 3, and 1. The equivalent signatures for the $(3,6)$-fullerene are $(3, 1, 2)$, $(3, 1, 0)$, and $(1, 3, 0)$}
    \label{fig:alternativeSpinesUpstairs}
\end{figure}

Every $(3,6)$-fullerene has \emph{equivalent signatures}, corresponding to these three directions (North to South, SW to NE, and NW to SE). As proved in \cite{green2024polyhedra}, these equivalent signatures  $(s_1, b_1, f_1)$, $(s_2, b_2, f_2)$, and $(s_3, b_3, f_3)$ are related by the following equations:

\begin{equation}
\label{eqn-s2}
s_2 = j_2(b_1 + 1) - 1
\end{equation}
where $j_2$ is the order of $f_1$ in $\mathbf{Z}_{s_1 + 1}$, that is, the smallest positive integer such that $j_2 \cdot f_1 \equiv 0 \pmod {s_1 +1 }$,

\begin{equation}
\label{eqn-b2}
b_2 = \dfrac{h - 2s_2}{2s_2 + 2}
\end{equation}
 where $h$, the number of hexagons, is given by $h = 2s_1  b_1 + 2s_1 + 2b_1$ by Equation~\ref{eqn-h},

\begin{equation}
\label{eqn-f2}
f_2 \equiv  - p_2(b_1 + 1) - (b_2 +1)\pmod {s_2 + 1}
\end{equation}
where $p_2$ is the smallest positive integer such that  $p_2 \cdot f_1 \equiv (b_2 + 1) \pmod{s_1 + 1}$,
\begin{equation}
\label{eqn-s3}
s_3 = j_3(b_1 + 1) - 1
\end{equation} 
where $j_3$ is the order of $f_1 + b_1 + 1$ in $\mathbf{Z}_{s_1 + 1}$, that is, the smallest positive integer such that $j_3 \cdot(f_1 + b_1 + 1) \equiv 0 \pmod {s_1 + 1}$,
\begin{equation}  
\label{eqn-b3}
b_3 = \dfrac{h - 2s_3}{2s_3 + 2}
\end{equation}
 where $h$, the number of hexagons, is given by $h = 2s_1  b_1 + 2s_1 + 2b_1$ by Equation~\ref{eqn-h},
 and
\begin{equation}
\label{eqn-f3}
f_3 \equiv  - p_3(b_1 + 1) \pmod{s_3 + 1}
\end{equation}
where $p_3$ is the smallest positive integer with $p_3 \cdot (f_1 + b_1 + 1) \equiv (b_3 + 1) \pmod{s_1 + 1}$.

The signatures $(s_1, b_1, f_1)$, $(s_2, b_2, f_2)$, and $(s_3, b_3, f_3)$ give the three alternative descriptions of the same arrangement of special hexagons on a hexagonal tiling of the plane, found by rotating the ``vertical'' direction by 0 or 180 degrees, 60 or 240 degrees, and 120 or 300 degrees clockwise, respectively. Therefore, this definition of equivalent signatures does in fact describe an equivalence relationship.

Recall that two $(3,6)$-fullerenes are equivalent if there is an orientation-preserving homeomorphism of the plane that is an isomorphism of the embedded graphs.  Theorem 4 from \cite{green2024polyhedra}, repeated below, says that equivalent $(3,6)$-fullerenes have equivalent signatures and vice versa:

\begin{theorem}
Suppose $T_0$ and $T_1$ are trihexes with signatures $(s_0, b_0, f_0)$ and $(s_1, b_1, f_1)$. Then $T_0$ and $T_1$ are equivalent trihexes if and only if $(s_0, b_0, f_0)$ and $(s_1, b_1, f_1)$ are equivalent signatures.

Therefore, there is a bijection between equivalence classes of trihexes and equivalence classes of signatures.
\end{theorem}

The proof of this theorem is given in \cite{green2024polyhedra}, but sketched here for completeness. It is clear that two $(3,6)$-fullerenes with equivalent signatures are equivalent, since they are each homeomorphic via an orientation preserving homeomorphism to the same quotient space, where the quotient space is simply described in two different ways by rotating the plane. Conversely, any two equivalent $(3,6)$-fullerenes have equivalent signatures, by the following argument. Any orientation-preserving homeomorphism between two $(3, 6)$-fullerenes induces an orientation-preserving homeomorphism between the two quotient spaces that give their signatures. This homeomorphism can be lifted to an orientation-preserving  homeomorphism between the hexagonally tiled planes that cover them, that preserves special hexagons. This homeomorphism is homotopic to a unique isometry of the plane, that agrees with the original homeomorphism on all vertices of hexagons. Because it is orientation-preserving, the only possibility for such an isometry is a translation or a rotation by a multiple of $60^\circ$. A translation or a rotation by $0^\circ$ or $180^\circ$ takes vertical spines to vertical spines. A rotation by $60^\circ$ or $240^\circ$ counterclockwise takes vertical spines to NW-to-SE spines, and a rotation by $120^\circ$ or $300^\circ$ counterclockwise takes vertical spines to SW-to-NE spines. Therefore, the two signatures must be equivalent. 

\vspace{0.3 cm}

In some cases, equations~\ref{eqn-s2}-\ref{eqn-f3}  generate \emph{coinciding signatures}: that is, the triples of numbers $(s_1, b_1, f_1)$, $(s_2, b_2, f_2)$, and $(s_3, b_3, f_3)$ are exactly the same. For example, the $(3,6)$-fullerene with signature $(13, 1, 4)$ has coinciding signatures.

  \section{Coinciding signatures and rotational symmetry}

\label{sec-symmetry}

We will say that a $(3,6)$-fullerene has \emph{3-fold rotational symmetry} if it is equivalent to an embedding on the sphere in which a 3-fold rotation of the sphere induces a graph isomorphism. This section shows that the $(3,6)$-fullerenes with coinciding signatures are exactly the $(3,6)$-fullerenes that have 3-fold rotational symmetry.

Recall from Section~\ref{sec-background} that every $(3,6)$-fullerene is equivalent to the quotient space of a hexagonal tiling of the plane, under a group of isometries generated by $180^\circ$ rotations.

\pagebreak
\begin{lemma}
\label{lem-welldefined}
Consider a $(3,6)$-fullerene that is represented as the quotient space $Q$ of the hexagonally tiled plane under a group of isometries generated by $180^\circ$ rotations around points of a superimposed parallelogram grid. Let $q: \mathbb{R}^2 \to Q$ be the quotient map. Suppose that $t$ is an isometry of the plane that takes special hexagons to special hexagons. Then the induced map $f:Q \rightarrow Q$ defined by $f(x) = q(t(y))$, where $y$ is any point in $q^{-1}(x)$, is a well-defined map that induces a graph isomorphism on the vertices and edges of $Q$. 
    \end{lemma}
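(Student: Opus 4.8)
The plan is to verify that $f$ is well-defined, then that it is a continuous bijection respecting the cell structure of $Q$, and hence a graph isomorphism on the $1$-skeleton.

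\medskip

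\noindent\textbf{Setup.} Let $G$ denote the group of isometries generated by the $180^\circ$ rotations, so $Q = \mathbb{R}^2/G$ and $q$ is the orbit map. The key structural fact I would invoke is that $t$ \emph{normalizes} $G$, i.e. $tGt^{-1} = G$. This should follow from the hypothesis that $t$ carries special hexagons to special hexagons: the rotocenters of $G$ are exactly the centers of the special hexagons, and $G$ is precisely the group generated by $180^\circ$ rotations about those points; since $t$ is an isometry permuting those points, conjugating a $180^\circ$ rotation about a rotocenter $p$ by $t$ gives a $180^\circ$ rotation about $t(p)$, which is again a rotocenter, hence again in $G$. (One should note $t$ need not preserve orientation; a $180^\circ$ rotation conjugated by a reflection is still a $180^\circ$ rotation, so this is fine.) This is the step I expect to require the most care — pinning down exactly why the generators of $G$ are recognizable from the special hexagons and why an isometry permuting special hexagons must therefore normalize $G$.

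\medskip

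\noindent\textbf{Well-definedness.} Given $x \in Q$, pick $y, y' \in q^{-1}(x)$, so $y' = g(y)$ for some $g \in G$. Then $t(y') = t(g(y)) = (tgt^{-1})(t(y))$, and since $tgt^{-1} \in G$ by the normalizing property, $t(y')$ and $t(y)$ lie in the same $G$-orbit, so $q(t(y')) = q(t(y))$. Hence $f(x)$ does not depend on the choice of representative. The same computation run with $t^{-1}$ in place of $t$ (note $t^{-1}$ also permutes special hexagons, hence also normalizes $G$) produces a well-defined map $f': Q \to Q$, and $f' \circ f = f \circ f' = \mathrm{id}_Q$ by another orbit-chasing computation, so $f$ is a bijection.

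\medskip

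\noindent\textbf{Continuity and cell structure.} Continuity of $f$ follows from the universal property of the quotient: $q \circ t : \mathbb{R}^2 \to Q$ is continuous and constant on $G$-orbits (by the well-definedness computation), so it factors through a continuous map $Q \to Q$, which is $f$; since $f'$ is continuous by the same argument, $f$ is a homeomorphism. Finally, I would check $f$ preserves the cell structure: the vertices and edges of $Q$ are the $q$-images of the vertices and edges of the hexagonal tiling, and $t$, being an isometry taking special hexagons to special hexagons, takes the hexagonal tiling to itself (it permutes all hexagons, special and non-special, since non-special hexagons are determined as those not containing rotocenters), hence takes vertices to vertices and edges to edges of the tiling. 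Pushing down through $q$, $f$ carries vertices of $Q$ to vertices and edges to edges bijectively, and preserves incidence because $t$ does. Therefore $f$ restricts to a graph isomorphism on the $1$-skeleton of $Q$, as claimed.
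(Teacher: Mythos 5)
Your proposal is correct and follows essentially the same route as the paper: well-definedness is established by conjugating the generating $180^\circ$ rotations by $t$ and observing that the resulting rotations are again about centers of special hexagons (i.e.\ $t$ normalizes the deck group), and the graph isomorphism follows because $t$ preserves the hexagonal tiling and is invertible. Your additional remarks on continuity and the explicit inverse $f'$ are fine but not needed beyond what the paper's argument already contains.
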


    \[
\begin{tikzcd}
\mathbb{R}^2 \arrow[r, "t"] \arrow[d, "q"'] 
& \mathbb{R}^2 \arrow[d, "q"] \\
Q \arrow[r, "f"] 
& Q
\end{tikzcd}
\]

\begin{proof}
    For $x \in Q$, suppose $y_1$ and $y_2$ are two points in $q^{-1}(x)$. Then $y_2 = \rho(y_1)$ for some isometry $\rho$, where $\rho = \rho_1 \circ \rho_2 \circ \cdots \circ \rho_n$, and for $1 \leq i \leq n$ each $\rho_i$  is a $180^\circ$ rotation around the center of a special hexagon.  Let $\rho_{i}$ be one such  $180^\circ$ rotation, with center of rotation $z$ in the center of a special hexagon. Since $t \circ \rho_{i}  \circ t^{-1}$ is the conjugate of a $180^\circ$ rotation, it must also be a $180^\circ$ rotation. Since it fixes $t(z)$, it must have center of rotation $t(z)$. Since $t$ takes special hexagons to special hexagons, $t(z)$ is at the center of a special hexagon, so $t \circ \rho_{i} \circ t^{-1}$ is a $180^\circ$ rotation around the center of a special hexagon. Therefore, $t \circ \rho \circ t^{-1} = t \circ \rho_1 \circ t^{-1} \circ t\circ \rho_2 \circ t^{-1} \circ \cdots \circ t \circ \rho_n \circ t^{-1}$ is a product of $180^\circ$ rotations around centers of special hexagons, so $q \circ t \circ \rho \circ t^{-1} = q$. Therefore, $q(t(y_2)) = q(t \circ \rho(y_1)) = q(t \circ \rho \circ t^{-1}(t(y_1))) = q(t(y_1))$.  So $f(x)$ is well-defined. 

    Since $t$ is an isometry that takes special hexagons to special hexagons, it must take all hexagons to hexagons in the hexagonal tiling for the plane. Therefore, $f$ takes vertices and edges of $Q$ to vertices and edges of $Q$, respectively, and induces a graph homomorphism on $Q$. Since $t$ is an isometry, it is invertible, so $f$ is also invertible and this graph homomorphism must be an isomorphism.
\end{proof}

\begin{proposition}
For a $(3,6)$-fullerene, the following are equivalent:
\begin{enumerate}
\item The $(3,6)$-fullerene has 3-fold rotational symmetry.

\item All three of the equivalent signatures of the $(3,6)$-fullerene are the same triple of numbers, i.e. the $(3, 6)$-fullerene has coinciding signatures.
\item The $(3,6)$-fullerene has at least two equivalent signatures that are the same triple of numbers.

\item In the hexagonal tiling of the plane that covers the $(3,6)$-fullerene, any rotation by $60^\circ$ around the center of a special hexagon takes special hexagons to special hexagons.

\item In the hexagonal tiling of the plane that covers the $(3,6)$-fullerene, there is some rotation by $120^\circ$ that takes special hexagons to special hexagons.

\end{enumerate}
\label{prop-3foldSymmetry}
\end{proposition}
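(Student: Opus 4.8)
The plan is to prove the five statements equivalent via the cycle $(1)\Rightarrow(2)\Rightarrow(3)\Rightarrow(4)\Rightarrow(5)\Rightarrow(1)$, organized around the affine sublattice $L$ of hexagon centers occupied by the special hexagons in the cover (well defined up to translation by \cite{green2024polyhedra}, and determining the trihex). Statements (4) and (5) are literally rotational symmetries of $L$; statements (2) and (3) record $L$ as read off along the three column directions; and statement (1) will be transferred to the cover using Lemma~\ref{lem-welldefined}.

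For $(1)\Rightarrow(2)$, I would use that a trihex has at most three spine decompositions, one per column direction, and that each consists of two spines which together are capped by the four triangular faces; as noted at the end of Section~\ref{sec-background} these decompositions are intrinsic, so the given order-$3$ rotation $g$ of the sphere permutes them and carries each decomposition to one with the same signature (here using that $g$ is orientation-preserving). If $g$ fixed a decomposition it would fix each of its two spines --- an order-$3$ permutation of a two-element set is trivial --- hence fix all four triangular faces and so at least four points of the sphere, which is impossible for a rotation of order $3$. Therefore there are exactly three decompositions, $g$ cycles them, and the three signatures coincide, which is (2); and $(2)\Rightarrow(3)$ is immediate.

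For $(3)\Rightarrow(4)\Rightarrow(5)$: rotation of the plane by $60^\circ$ cyclically permutes the three column directions, so after relabeling we may assume the two coinciding signatures are those of $L$ and of $R_{-60^\circ}L$, each read off along the vertical direction. Since a signature determines its lattice up to translation \cite{green2024polyhedra}, $R_{-60^\circ}L$ is a translate of $L$; taking differences, the underlying linear lattice $L_0=L-L$ satisfies $R_{60^\circ}L_0=L_0$. Then for a special hexagon center $p$ (a point of $L$), writing $L=L_0+p$ gives $R_{p,60^\circ}(L)=R_{60^\circ}(L_0)+p=L_0+p=L$, which is (4); and since $R_{p,120^\circ}=R_{p,60^\circ}\circ R_{p,60^\circ}$, (5) follows at once. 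Finally, for $(5)\Rightarrow(1)$, let $R$ be a $120^\circ$ rotation of the plane taking special hexagons to special hexagons. By Lemma~\ref{lem-welldefined} it induces a graph isomorphism $f$ of the trihex $Q$ with $f^3=\mathrm{id}$; and $f\ne\mathrm{id}$, since the $120^\circ$ rotation of the special hexagon fixed by $R$ descends through the $2$-to-$1$ quotient to a genuine $120^\circ$ rotation of the corresponding triangular face, cycling its three vertices. Hence $f$ has order exactly $3$, and since every finite-order self-homeomorphism of $S^2$ is topologically conjugate to an orthogonal map, $Q$ can be re-embedded on the sphere so that $f$ is a genuine $120^\circ$ rotation; this is (1), closing the cycle.

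I expect the main obstacle to be the two topological transfer steps, $(1)\Rightarrow(2)$ and $(5)\Rightarrow(1)$: one must pass carefully between graph automorphisms of $Q$, finite-order homeomorphisms of the sphere, and isometries of the hexagonal cover, and invoke the fixed-point count for a $120^\circ$ rotation, the nontriviality of the induced map (equivalently, the absence of an order-$3$ element in the deck group generated by half-turns), and the conjugacy of a finite cyclic action on $S^2$ to a linear one. By comparison $(3)\Rightarrow(4)$ is routine once one has the established dictionary between signatures and lattices; one can also verify $(4)\Leftrightarrow(5)$ directly, using that an ideal of the Eisenstein integers stable under a primitive cube root of unity is automatically stable under a primitive sixth root of unity.
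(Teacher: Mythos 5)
Your proposal follows the same cycle $(1)\Rightarrow(2)\Rightarrow(3)\Rightarrow(4)\Rightarrow(5)\Rightarrow(1)$ as the paper and uses the same key ingredients (Lemma~\ref{lem-welldefined} and the conjugacy of finite-order sphere homeomorphisms to isometries), but two steps are executed differently. For $(1)\Rightarrow(2)$ the paper argues positively: the order-3 rotation permutes the four triangles, hence fixes one setwise, cannot fix that triangle's edges, and therefore cyclically permutes the three spines leaving it, forcing the three signatures to agree. You argue negatively that no spine decomposition can be invariant, so the three decompositions are cycled. Your route works, but the punchline ``fix all four triangular faces and so at least four points of the sphere'' is doing real work that you do not supply: an invariant closed face yields a fixed point only via a Brouwer-type argument, and one must also check these fixed points lie in the interiors of the faces and are distinct (a pole at a vertex would $3$-cycle the incident faces, and a pole interior to an edge would force the rotation to be the identity along that edge) before comparing with the exactly two fixed points of a genuine order-3 rotation. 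For $(3)\Rightarrow(4)$, your difference-set computation with the lattice of special-hexagon centers is in fact more explicit than the paper's one-line assertion, and is a nice way to make that step airtight.

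The step you should repair is $(5)\Rightarrow(1)$: you refer to ``the special hexagon fixed by $R$,'' but hypothesis (5) only provides some $120^\circ$ rotation carrying special hexagons to special hexagons; its rotocenter could sit at a vertex of the tiling or at the center of a non-special hexagon, in which case $R$ fixes no special hexagon. The fix is the same trick you used for $(3)\Rightarrow(4)$: if one $120^\circ$ rotation preserves the set $S$ of special-hexagon centers, then passing to the difference lattice and recentering shows the $120^\circ$ rotation about any point of $S$ preserves $S$, so you may assume $R$ is centered at a special hexagon; your observation that the induced map then cyclically permutes the three vertices of the corresponding triangular face gives $f\neq\mathrm{id}$ (a point the paper itself asserts without justification). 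Alternatively, the deck-group remark in your final paragraph --- a group generated by half-turns contains only half-turns and translations, hence no $120^\circ$ rotation --- also closes this gap directly.
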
 

\begin{proof}
    (1) $\implies$ (2): Suppose the $(3,6)$-fullerene has 3-fold rotational symmetry. That is, it is equivalent to an embedding on the sphere in which a 3-fold rotation $r$ of the sphere induces a graph isomorphism. 
    Since this rotation $r$ takes triangles to triangles, and there are four triangles, it must take at least one triangle to itself. The rotation $r$ cannot fix the edges of this triangle, since it would then be the identity, so it must permute them. 
    Consequently, the rotation permutes the three spines exiting that triangle. Therefore, the length of the spines is the same in all three signatures of the $(3,6)$-fullerene. Since $r$ takes hexagons to hexagons and triangles to triangles, the number of belts and the offsets are also the same in all three signatures.
    
    $(2) \implies (3)$ is clear. 
    
    $(3) \implies (4)$: Suppose the $(3,6)$-fullerene has two equivalent signatures that are the same triples of numbers (e.g. $(s_1, b_1, f_1) = (s_2, b_2, f_2)$). Then the hexagonal tiling that covers the $(3,6)$-fullerene has the same configuration of special hexagons, whether using spines in either of two directions (e.g.  vertical direction and SW to NE direction). Therefore, a rotation of the plane by $60^\circ$ clockwise or counterclockwise through the center of a special hexagon will take special hexagons to special hexagons. 

    $(4) \implies (5)$: Assume that any rotation by $60^\circ$ around the center of a special hexagon takes special hexagons to special hexagons. Choose one such rotation $r$. Then $r^2$ is a rotation by $120^\circ$ that takes special hexagons to special hexagons. 
    
    $(5) \implies (1)$: Suppose that there is rotation $r$ by $120^\circ$ that takes special hexagons to special hexagons. Let $Q$ be the quotient space of the plane $\mathbb{R}^2$ under the group of isometries generated by $180^\circ$ rotations around centers of special hexagons. Let $q$ be the quotient map $q:\mathbb{R}^2 \to Q$. By Lemma~\ref{lem-welldefined}, the rotation $r$ induces a well-defined map $f:Q \to Q$ defined by $f(x) = q(r(y))$, where $y$ is any point in $q^{-1}(x)$, and this map $f$ induces a graph isomorphism on the vertices and edges of $Q$. Since $r$ has order 3 and $f$ is not the identity, $f$ has order 3 also.  Since $Q$ is homeomorphic to a sphere \cite{green2024polyhedra}, there is an embedding of the $(3,6)$-fullerene on the sphere and an order 3 homeomorphism of the sphere that induces a graph isomorphism on this embedding. Since every homeomorphism of the sphere is conjugate by a homeomorphism to an isometry \cite{Kerekjaro1919}, it follows that the $(3, 6)$-fullerene is equivalent to an embedding on the sphere for which an order 3 isometry of the sphere induces a graph isomorphism. This order 3 isometry must be a rotation.

\end{proof}

\vspace{1 cm}

%%%%%%%%%%%%%%%%%%%%%%%%%%%%%%%%%%%%%%%%%%%%%%%%%
\section{Coinciding signatures and the equation \texorpdfstring{$x^2 + x + 1 \equiv 0 \pmod n$}{x² + x + 1 ≡ 0 (mod n)}}

\label{sec-equation}

This section gives an algebraic condition for coinciding signatures; that is, an algebraic condition for which $(3,6)$-fullerenes have 3-fold rotational symmetry.

\begin{lemma}\label{lem-ifDupThenBPlus1IsFactor} If a $(3,6)$-fullerene with signature $(s, b, f)$ has coinciding signatures, then $b + 1$ divides both $s + 1$ and $f$. 
\end{lemma}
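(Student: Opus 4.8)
The plan is to read both divisibilities directly off Equation~\eqref{eqn-s2} together with the hypothesis. Write $(s,b,f)$ for the common signature, identified with $(s_1,b_1,f_1)$ in the notation of Section~\ref{sec-background}. Since the trihex has coinciding signatures we have in particular $s_2 = s_1 = s$, so substituting into Equation~\eqref{eqn-s2}, where $j_2$ denotes the order of $f$ in $\mathbb{Z}_{s+1}$, gives
\[
 s + 1 \;=\; j_2\,(b+1).
\]
This is exactly the statement that $b+1$ divides $s+1$, which is the first claim.

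For the second claim I would use only the defining property of $j_2$, namely $j_2 f \equiv 0 \pmod{s+1}$, i.e. $(s+1) \mid j_2 f$. Combining this with $s+1 = j_2(b+1)$ gives $j_2(b+1) \mid j_2 f$, and cancelling the positive integer $j_2$ yields $(b+1) \mid f$, the second claim. Equivalently, one can recall that the additive order of $f$ in $\mathbb{Z}_{s+1}$ equals $(s+1)/\gcd(f,s+1)$, so that $j_2(b+1) = s+1 = j_2\gcd(f,s+1)$ forces $b+1 = \gcd(f,s+1)$, which gives both divisibilities simultaneously.

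I do not anticipate a genuine obstacle here: the argument is a one-line manipulation once the correct defining relation is invoked. The only point that needs a moment's care is the degenerate case $f \equiv 0 \pmod{s+1}$, where $j_2 = 1$ and hence $s = b$; there $b+1 \mid f$ holds trivially and is consistent with the computation above. As a consistency check, running the parallel argument with Equation~\eqref{eqn-s3} — using $j_3(f+b+1)\equiv 0 \pmod{s+1}$ together with $s+1 = j_3(b+1)$ — re-derives $(b+1)\mid(f+b+1)$ and hence once more $(b+1)\mid f$, but yields nothing new.
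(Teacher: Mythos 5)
Your proof is correct. The first divisibility is obtained exactly as in the paper: coinciding signatures give $s_2=s_1$, and Equation~\ref{eqn-s2} then reads $s+1=j_2(b+1)$, so $(b+1)\mid(s+1)$. For the second divisibility, however, you take a genuinely different and leaner route. The paper invokes Equation~\ref{eqn-f2}, i.e.\ the coincidence of offsets $f_2=f_1$: writing $f\equiv -p_2(b+1)-(b+1)\equiv -(p_2+1)(b+1)\pmod{s+1}$ and combining with $(b+1)\mid(s+1)$ gives $(b+1)\mid f$. You instead never touch the offset equation: you use only the defining property of $j_2$, namely $(s+1)\mid j_2 f$, together with $s+1=j_2(b+1)$, and cancel $j_2$ to get $(b+1)\mid f$ — equivalently, $j_2=(s+1)/\gcd(f,s+1)$ forces $b+1=\gcd(f,s+1)$. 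This is valid (the $f=0$ edge case is handled as you note), uses less of the hypothesis (only the spine-length coincidence $s_2=s_1$, not $f_2=f_1$), and even yields the slightly stronger conclusion that $b+1$ is exactly $\gcd(f,s+1)$. What the paper's version buys is a warm-up with Equation~\ref{eqn-f2} and the auxiliary integer $p_2$, which is the same machinery its proof of Theorem~\ref{thm-dupSigsFmla} relies on immediately afterward; your version buys brevity and a sharper statement.
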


\begin{proof} 
By Equation~\ref{eqn-s2} in Section~\ref{sec-background}, we know that $s + 1 = j_2(b+1)$, so  $b+1$ must divide $s+1$. From Equation~\ref{eqn-f2}, $f \equiv -p_2(b+1) - (b+1) \equiv -(p_2 + 1)(b + 1) \pmod {s+1}$. Therefore, $s+1$ divides $f + (p_2 + 1)(b+1)$. Since $b+1$ divides $s+1$ and $s+1$ divides $f + (p_2 + 1)(b+1)$, $b+1$ divides $f + (b+1)(p_2 + 1)$. Therefore, $b + 1$ divides $f$.

\end{proof}

\begin{proposition} \label{thm-dupSigsFmla} A $(3,6)$-fullerene has coinciding signatures if and only if it has a signature of the form $(cm - 1, m-1, gm)$, where $1 \leq c$,  $1 \leq m$, $0 \leq g < c$, and $g^2 + g + 1 \equiv 0 \pmod {c}$.
\end{proposition}

\begin{proof}
Suppose that a $(3,6)$-fullerene has coinciding signatures $(s, b, f)$. By Lemma~\ref{lem-ifDupThenBPlus1IsFactor}, $s+1$ and $f$ are both multiples of $b+1$. Let $m = b+1$. Then $b = m-1$, $s+1 = cm$ for some integer $c$, and $f = gm$ for some integer $g$. So the signature must be of the form $(cm - 1, m - 1, gm)$. Note that $m \geq 1$ since $b \geq 0$ and $c \geq 1$ since $s \geq 0$. We can assume $ 0 \leq g < c$, since we can choose $f$, defined mod $s+1$, such that $0 \leq f \leq s$. We must still show that $g^2 + g + 1 \equiv 0 \pmod c$.

If $c = 1$, then $g = 0$, so  $f = 0$, and the signature is of the form $(m - 1, m - 1, 0)$. Note that the equation $g^2 + g + 1 \equiv 0 \pmod c$ holds trivially since $c = 1$. 

Suppose $c \neq 1$. 
Consider the number $p_2$ in Equation~\ref{eqn-f2}: $p_2$ is the smallest positive integer such that $p_2 f_1 \equiv (b_2 + 1) \pmod {s_1+ 1}$, that is, the smallest positive integer such that $p_2 gm \equiv m \pmod {cm}$. This is also the smallest positive integer such that $p_2 g \equiv 1 \pmod c$. 

From Equation~\ref{eqn-f2}, we have that $gm \equiv f_2 \equiv  - p_2(b_1 + 1) - (b_2+1) \pmod {s_2 + 1}$, so $gm \equiv  - p_2(m)- (m)  \equiv ( - p_2 - 1)m \pmod {cm}$, which means that $g \equiv ( - p_2 - 1) \pmod {c}$. Multiplying both sides by $g$ we get that $g^2 \equiv g(-p_2 - 1) \equiv -g p_2 - g \equiv -1 - g \pmod {c}$. Here, we use the fact that $p_2 g \equiv 1 \pmod{c}$. Therefore, $g^2 + g  + 1 = 0 \pmod{c}$.

Conversely, suppose that a $(3,6)$-fullerene has a signature $(s_1, b_1, f_1)$ of the form \mbox{$(cm - 1, m - 1, gm)$} with $1 \leq c$, $0 \leq g < c $, and $g^2 + g + 1 \equiv 0 \pmod c$.

Since $g^2 + g + 1 \equiv 0 \pmod c$, $g$ and $c$ are relatively prime. The order of $gm$ in $\mathbb{Z}_{cm}$ is therefore $c$. So in  Equation~\ref{eqn-s2}, $j_2  = c$ and $s_2 = j_2(b_1 + 1) - 1  = cm - 1$. Therefore, $s_2 = s_1$.

By Equation~\ref{eqn-b2}, $b_2 =  \dfrac{h - 2s_2}{2s_2 + 2} = \dfrac{h - 2s_1}{2s_1 + 2} = b_1$,  so $b_2$ and $b_1$ are also the same. 

To verify that $f_2$ is the same as $f_1$, using Equation~\ref{eqn-f2}, we need to find the smallest positive integer $p_2$ such that $p_2 f_1 \equiv (b_2 + 1) \pmod {s_1 + 1}$, that is, such that $p_2 gm \equiv m \pmod {cm}$. Equivalently, we need the smallest positive integer $p_2$ such that $p_2 g \equiv 1 \pmod {c}$. 

If $c = 1$, then since $0 \leq g < c$, $g = 0$ and the first signature is $(m-1, m - 1, 0)$.  Also $p_2 = 1$, since $p_2$ is the smallest positive integer such that $p_2 \cdot 0 \equiv 1 \pmod {1}$. By Equation~\ref{eqn-f2}, $f_2 \equiv  - p_2(b_1 + 1) - (b_2 + 1) \pmod{s_2 + 1}$, so $f_2 =  -1(m) -(m)  \equiv 0 \pmod {1\cdot m}$, and the signature $(s_2, b_2, f_2)$ is also $(m-1, m-1, 0)$, coinciding with the original signature $(s_1, b_1, f_1)$. 

Suppose that $c > 1$. We will show that $p_2 = c - g - 1$.    Note that  $(c - g - 1 ) g \equiv cg - g^2 - g \equiv -(g^2 + g) \pmod c$, and $g^2 + g + 1 \equiv 0 \pmod c$, so $(c - g - 1 ) g  \equiv 1 \pmod c$. Since $g$ and $c$ are relatively prime, there is a unique solution $x$ to the equation $x g \equiv 1 \pmod{c}$ with $0 \leq x < c$. Since $c - g - 1$ solves this equation and $0 \leq c - g - 1 < c$, this unique solution is $c - g - 1$. Also $c - g - 1 \neq 0$ since $(c - g- 1)g \equiv 1 \pmod{c}$ and $c \neq 1$. Since  $p_2$ is the smallest positive integer such that $p_2 g \equiv 1 \pmod{c})$, $p_2$ must equal  $c - g - 1$. Therefore, according to Equation~\ref{eqn-f2}, $f_2 \equiv - p_2(b_1 + 1) - (b_2+1)  \equiv  - (c - g - 1)(m ) - (m) \equiv -cm +  gm \equiv gm \pmod{s_2 + 1}$, since $s_2 + 1 = cm$. So $f_2 = f_1$. 

Therefore, $(s_2, b_2, f_2) = (s_1, b_1, f_1)$. Since two signatures coincide, by Proposition~\ref{prop-3foldSymmetry} all three must coincide.

\end{proof}

Proposition \ref{thm-dupSigsFmla} shows that the equation $x^2 + x + 1 \equiv 0 \pmod n$ is key to detecting coinciding signatures. The number of solutions to this equation is given in Proposition 3.2 of \cite{mohar1987enumeration}:

\begin{fact} \cite{mohar1987enumeration} For $n \geq 1$, define 
$\Omega(n) := \#\{x \in \mathbb{Z}_n : x^2 + x + 1 \equiv 0 \pmod{n}\}$.
Let $n = \displaystyle 3^{y}\prod_{i=1}^z p_i^{k_i}$
 be the prime decomposition of $n$, where the $p_i$ are distinct primes other than 3, $y \geq 0$, $z \geq 0$, and $k_i \geq 1$ for $1 \leq i \leq z$. If $y \in \{0, 1\}$ and  $p_i \equiv 1 \pmod 3$ for $1 \leq i \leq z$, then $\Omega(n) = 2^z$. In any other case, $\Omega(n) = 0$.

\label{fact-product} 
\end{fact}

Note that the value of $\Omega(n)$ depends on the exponent of the prime $3$ in the prime factorization, but otherwise, only on which primes appear in the prime factorization and not their exponents. For example, $\Omega(63) = 0$ since $63 = 3^2 \cdot 7$, and $3$ is raised to a power greater than 1. Also, $\Omega(105) = 0$, since $105 = 3 \cdot 5 \cdot 7$ and $5 \not\equiv 1 \pmod 3$. However, $\Omega(637) = \Omega(273) = 2^2$, since $637 = 7^2 \cdot 13$ and $273 = 3 \cdot 7 \cdot 13$,  so both numbers contain exactly two primes not equal to 3 in their prime decomposition, the primes $7$ and $13$, and $7 \equiv 1 \pmod 3$ and $13 \equiv 1 \pmod 3$, .

%%%%%%%%%%%%%%%%%%%%%%%%%%%%%%%%%%%%%%%%%%%%%%%%%
\section{The number of (3,6)-fullerenes with a given number of vertices}

\label{sec-(3,6)-fullereneCounts}
In the counting formulas of this section and subsequent sections, we will say ``the number of $(3, 6)$-fullerenes'' as shorthand for ``the number of equivalence classes of $(3, 6)$-fullerenes''. Recall that two $(3,6)$-fullerenes are equivalent if they are not only isomorphic as graphs but there is also an orientation-preserving homeomorphism of the plane that takes one graph to the other. In other words, left and right handed $(3,6)$-fullerenes are considered distinct even though they are isomorphic as graphs.

Let $\sigma(V)$ represent the number of signatures that represent a $(3,6)$-fullerene with $V$ vertices. If $V$ is not a multiple of 4, then $\sigma(V) = 0$. Otherwise, for each signature $(s, b, f)$, $s+1$ is a factor of $V$, since $V = 4(s+1)(b+1)$  by Equation~\ref{eqn-V}.  Once this factor is chosen, $b$ is determined by this same equation. For each pair $(s, b)$, there are $s+1$ possible values of $f$, since $f$ is defined mod $s+1$. Therefore, for each factor $s+1$ of $\dfrac{V}{4}$, there are $s+1$ possible signatures, so $\sigma(V)$ is equal to the sum of the factors of $\dfrac{V}{4}$. This sum of factors is given by:

\begin{equation}
\displaystyle \sigma(V) = \prod_{i = 1}^z \dfrac{p_i^{k_i + 1} - 1}{p_i - 1}
\label{eqn-sigma}
\end{equation}
where  $\displaystyle \prod_{i = 1}^z p_i^{k_i}$ is the prime factorization of $\dfrac{V}{4}$, as stated in Lemma 2 of \cite{green2024polyhedra}.

Let $\delta(V)$ be the number of $(3,6)$-fullerenes with 3-fold rotational symmetry with $V$ vertices.   When $V$ is not a multiple of 4, $\delta(V) = 0$. When $V$ is a multiple of 4, 
the following theorem computes $\delta(V)$ in terms of the prime factorization of $\dfrac{V}{4}$.

\begin{theorem} Suppose $V$ is a multiple of 4 and $\frac{V}{4}$ has prime factorization $\displaystyle 3^y \prod_{i = 1}^z p_i^{k_i} \prod_{j = 1}^w q_j^{\ell_j}$  with distinct primes $p_i \equiv 1 \pmod 3$ and distinct primes $q_j \equiv 2 \pmod 3 $, where $z, w,$ and the exponent $y$ can be any integer $\geq 0$, and the other exponents are all $\geq 1$. Then the number $\delta(V)$ of $(3,6)$-fullerenes with $V$ vertices and 3-fold rotational symmetry  is 0 if $\ell_j$ is odd for any $j$ with $1 \leq j \leq w$. If all $\ell_j$ are even, then  $\delta(V) = \displaystyle \prod_{i = 1}^z (k_i + 1)$.
\label{thm-delta}
\end{theorem}

\begin{proof}

By Proposition~\ref{prop-3foldSymmetry}, the number of $(3,6)$-fullerenes with $V$ vertices with 3-fold rotational symmetry is the same as the number of $(3,6)$-fullerenes with $V$ vertices with coinciding signatures. By Theorem~\ref{thm-dupSigsFmla}, the number of $(3,6)$-fullerenes with $V$ vertices with coinciding signatures is the number of triples $(cm-1, m-1, gm)$ where $1 \leq c$, $1 \leq m$, $0 \leq g < c$, with $g^2 + g + 1 \equiv 0 \pmod c$ and $4cm^2 = V$. The last equation comes from the fact that $V = 4(s+1)(b+1)$ (Equation~\ref{eqn-V}), where $s = cm  -1$ and $b = m-1$.

Therefore, the number of $(3,6)$-fullerenes with 3-fold rotational symmetry with $V$ vertices can be found by counting the solutions to $g^2 + g + 1 \equiv 0 \pmod c$ for all possible ways of factoring $\dfrac{V}{4}$ as $cm^2$. 

By Fact~\ref{fact-product}, $g^2 + g + 1 \equiv 0 \pmod c$ will have 0 solutions if $c$ has any prime factors that are congruent to $2 \pmod 3$ or has a prime factor of 3 raised to power higher than 1. So the only options for $c$ and $m$ in the factorization $\dfrac{V}{4} = cm^2$ that generate solutions are those options where all prime factors congruent to $2 \pmod 3$ are put into $m^2$ and only prime factors congruent to $1 \pmod 3$ and one or zero copies of $3$ are put into $c$. Therefore, it is only possible to get solutions if all exponents $\ell_j$ for $1 \leq j \leq w$ are even. 

In this case, in order to get solutions, $m^2$ must include $q_1^{\ell_1} q_2^{\ell_2} \cdots q_w^{\ell_w}$ as a factor. In addition, $m^2$ must include the highest possible even power of $3$ in the prime factorization $\dfrac{V}{4}$ so that $c$ contains only one factor of 3 if $y$ is odd, or no factors of 3 if $y$ is even. For every prime $p_i$ with exponent $k_i$ in the prime factorization of $\dfrac{V}{4}$, $m^2$ can contain any even power of $p_i$, where the power is less than or equal to $k_i$, with the remaining factors of $p_i$ going into $c$. Therefore, $c$ can be factored as $3^{r} \cdot p_1^{t_1} p_2^{t_2} \cdots p_z^{t_z}$ where $r$ is either 0 or 1, and for each $i$ with $ 1 \leq i \leq z$, we have $0 \leq t_i \leq k_i$  and  $t_i$ has the same parity as $k_i$. 

Recall that for $c = 3^{r} \cdot p_1^{t_1} p_2^{t_2} \cdots  p_z^{t_z}$, if $d$ is the number of non-zero exponents among $\{t_1, t_2, \cdots , t_z\}$, then there are exactly $2^d$ solutions to $g^2 + g + 1 \equiv 0 \pmod c$,  by Fact~\ref{fact-product}. 

For each exponent $k_i$, if $k_i$ is even, form the expression $(1 + 2 + 2 + 2 + \cdots + 2)$, with one 2 for each positive even number $\leq k_i$. If $k_i$ is odd, form the expression $(2 + 2 + \cdots + 2)$, with one 2 for each positive odd number $\leq k_i$. If we multiply all these expressions together, with one factor for each $k_i$, and distribute, then each term will correspond to exactly one value of $c$ that generates solutions, and the number of solutions for that value of $c$ will be exactly the value of the term. Therefore, the product will give exactly the number of $(3,6)$-fullerenes with $V$ vertices with 3-fold rotational symmetry. 

If $k_i$ is even, there are $\dfrac{k_i}{2}$ positive even numbers less than or equal to $k_i$, so the sum $(1 + 2 + 2 + \cdots + 2)$ is equal to $\dfrac{k_i}{2} \cdot 2 + 1 = k_i + 1$.  If $k_i$ is odd, there are $\dfrac{k_i + 1}{2}$ positive odd numbers less than or equal to $k_i$, so the expression $(2 + 2 +  \cdots + 2)$ will equal $\dfrac{k_i + 1}{2} \cdot 2 = k_i + 1$ also.  

Therefore, the number $\delta(V)$ of  $(3,6)$-fullerenes with $V$ vertices and 3-fold rotational symmetry is given by $\displaystyle \prod_{i = 1}^z (k_i + 1)$. 
\end{proof}
An alternative proof of this formula, using ideals in the Eisenstein lattice, can be found in Theorem 4.3 of J. Huijben's thesis \cite{huijben2016tetrahedral}. 

We can now give an exact count of the number of equivalence classes of $(3,6)$-fullerenes with $V$ vertices, which we will call $\alpha(V)$.  If $V$ is not a multiple of 4, then $\alpha(V) = 0$, since the number of vertices of a $(3,6)$-fullerene is always a multiple of 4 by Equation~\ref{eqn-V}. Otherwise, $\alpha(V)$ is given by the following theorem.

\begin{theorem} Let $\alpha(V)$ be the number of $(3, 6)$-fullerenes with $V$ vertices, $\delta(V)$ be the number of $(3, 6)$-fullerenes with $V$ vertices with 3-fold rotational symmetry, and $\sigma(V)$ be the number of signatures of $(3, 6)$-fullerenes with $V$ vertices. Then $$\alpha(V) = \dfrac{1}{3} \sigma(V)+ \dfrac{2}{3} \delta(V).$$
Suppose $V$ is a multiple of 4 and $\frac{V}{4}$  has prime factorization $\displaystyle 3^y \prod_{i = 1}^z p_i^{k_i} \prod_{j = 1}^w q_j^{\ell_j}$ with distinct primes $p_i \equiv 1 \pmod 3$ and distinct primes $q_j \equiv 2 \pmod 3 $, where $z, w$, and the exponent $y$ can be any integer $\geq 0$, and the other exponents are all $\geq 1$.

If  $\ell_j$ is odd for any $j$ with $1 \leq j \leq w$, then 

$$\displaystyle \alpha(V) =  \dfrac{1}{3} 
\left( \dfrac{3^{y + 1} - 1}{3 - 1}\prod_{i = 1}^z \dfrac{p_i^{k_i + 1} - 1}{p_i - 1} \prod_{j=1}^w \dfrac{q_j^{\ell_j + 1} -  1}{q_j - 1}\right)$$ 

If all  $\ell_j$ are even, then 

$$\displaystyle \alpha(V) = \dfrac{1}{3} 
\left( \dfrac{3^{y + 1} - 1}{3 - 1}\prod_{i = 1}^z \dfrac{p_i^{k_i + 1} - 1}{p_i - 1} \prod_{j =1}^w \dfrac{q_j^{\ell_j + 1} -  1}{q_j - 1}\right) + \frac{2}{3} \left( \prod_{i = 1}^z (k_i + 1) \right)$$.
\label{thm-$(3,6)$-fullereneCount}
\end{theorem}

\begin{proof}
The number of $(3,6)$-fullerenes with $V$ vertices is given by $\dfrac{1}{3} (\sigma(V) - \delta(V)) + \delta(V)$, since there are three distinct equivalent signatures that describe each $(3,6)$-fullerene without 3-fold rotational symmetry and only one signature that describes each $(3,6)$-fullerene with 3-fold rotational symmetry. This expression can be rewritten as $\dfrac{1}{3} \sigma(V)+ \dfrac{2}{3} \delta(V)$. The other formulas in the theorem follow from the formula for $\sigma(V)$ from Equation~\ref{eqn-sigma} and the formula for $\delta(V)$ given in Theorem~\ref{thm-delta}.
\end{proof}

See Table~\ref{table:bigCounts} for the number of $(3,6)$-fullerenes for values of $V$ up to 480.

%%%%%%%%%%%%%%%%%%%%%%%%%%%%%%%%%%%%%%%%%%%%%%%%%
\section{Mirror symmetry}
\label{sec-mirror}

We will say that a $(3,6)$-fullerene has \emph{mirror symmetry} if it is equivalent to an embedding on the sphere for which a reflection of the sphere induces an automorphism of the graph.

\begin{figure}[ht]

\begin{center}
\includegraphics[height = 6 cm]{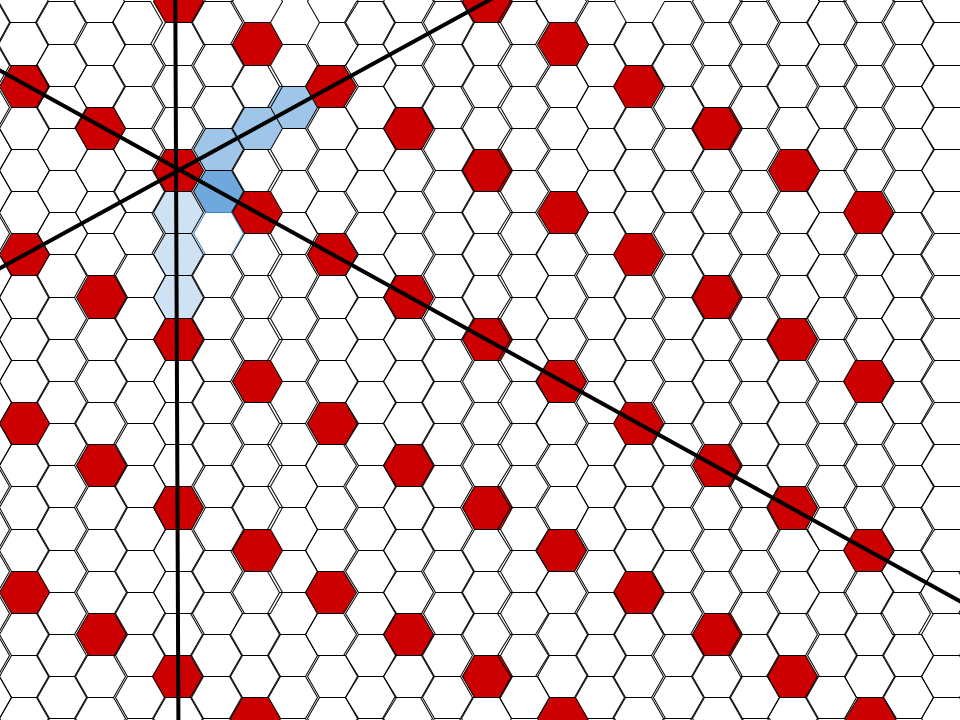}
\end{center}
    \caption{Three lines that bisect spines. Special hexagons are shaded dark red (or dark gray if printed in grayscale). Only the line in the NW to SE direction is a mirror line.}
   \label{fig:bisectSpines}
\end{figure}

We will say that a line in the hexagonal covering of a $(3,6)$-fullerene \emph{bisects spines} if it passes through the center of a spine column in either the vertical direction, the SW to NE direction, or the NW to SE direction. See Figure~\ref{fig:bisectSpines}.

Recall that equivalent signatures are signatures that are either equal or related by equations~\ref{eqn-s2} - \ref{eqn-f3} in Section~\ref{sec-background}. By Theorem~4 of \cite{green2024polyhedra}, two $(3, 6)$-fullerenes have equivalent signatures if and only if they are equivalent $(3, 6)$-fullerenes.

\pagebreak
\begin{proposition}
For a $(3,6)$-fullerene, the following are equivalent
\begin{enumerate}
    \item The $(3,6)$-fullerene  has mirror symmetry.
\item For every signature of the $(3,6)$-fullerene $(s, b, f)$, the signature $(s, b, s-b-f )$ is equivalent to $(s, b, f)$.
    \item For some signature of the $(3,6)$-fullerene $(s, b, f)$, the signature $(s, b, s-b-f )$ is equivalent to $(s, b, f)$.
    \item In the hexagonal tiling of the plane that covers the $(3,6)$-fullerene, there is a reflection through a mirror line that bisects spines, that takes special hexagon to special hexagons.
    \item In the hexagonal tiling of the plane that covers the $(3,6)$-fullerene, there is a reflection through a mirror line that takes special hexagon to special hexagons.

\end{enumerate} 
\label{prop-mirrorSymmetry}
\end{proposition}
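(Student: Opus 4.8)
The plan is to prove the equivalences in a cycle, mirroring the structure of the proof of Proposition~\ref{prop-3foldSymmetry}: I would establish $(1) \implies (2) \implies (3) \implies (4) \implies (5) \implies (1)$. The geometric implications $(4) \implies (5)$ and the passage from a reflection of the hexagonal tiling back to a graph automorphism of $Q$ should be handled essentially as before, so the genuinely new content lies in the algebra of the offset under reflection, i.e. the appearance of $s - b - f \pmod{s+1}$.

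First, for $(1) \implies (2)$: given an embedding on the sphere where a reflection $\rho$ induces a graph isomorphism, I would argue as in Proposition~\ref{prop-3foldSymmetry} that $\rho$ fixes at least one triangle (since there are four triangles and $\rho$ has order $2$, and a reflection of the sphere fixes a great circle, so it cannot freely permute four triangles without fixing one), hence reverses the cyclic order of the three spines emanating from it. Thus the spine length $s$ and belt number $b$ are preserved for each of the three signature directions, but the sense in which one spine column is offset from the next is reversed. I would then translate "reversed offset'' into the formula: if translating a special hexagon $b+1$ columns in the relevant direction lands it $f$ hexagons below a special hexagon, then under the reflection it lands $f$ hexagons \emph{above}, which after reindexing within the spine (of "circumference'' $s+1$ in the column, accounting for the shear that produces the extra $-(b+1)$ term) becomes $s - b - f \pmod{s+1}$. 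The cleanest way to pin down this formula is to work upstairs in the hexagonal tiling: a reflection of the plane through a line bisecting spines in one of the three directions takes the lattice of special-hexagon centers to itself with the same $(s,b)$ data, and one computes directly that the new offset is $s-b-f \pmod{s+1}$.

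The steps $(2) \implies (3)$ is immediate, and $(3) \implies (4)$ runs as follows: if $(s,b,f)$ and $(s,b,s-b-f)$ are equivalent signatures of the trihex, then the configuration of special hexagons in the hexagonal tiling, read in the spine-column direction corresponding to that signature, is carried to itself by the reflection through the line bisecting those spine columns — because that reflection is exactly the operation that sends the offset $f$ to $s-b-f \pmod{s+1}$ while fixing $s$ and $b$. Hence that reflection takes special hexagons to special hexagons. For $(4) \implies (5)$, a mirror line bisecting spines is in particular a mirror line, so this is trivial. For $(5) \implies (1)$, I would invoke Lemma~\ref{lem-welldefined} with $t$ the reflection: it induces a well-defined graph isomorphism $f:Q \to Q$, which has order $2$ and is not the identity, and then use that $Q$ is homeomorphic to a sphere together with the fact (as in Proposition~\ref{prop-3foldSymmetry}, citing \cite{Kerekjaro1919}) that every finite-order homeomorphism of the sphere is conjugate to an isometry, to conclude that the trihex has an embedding on which an order-$2$ isometry — necessarily a reflection, since an order-$2$ orientation-preserving isometry of the sphere is a rotation by $180^\circ$ and would have to move the fixed triangle, but actually one must argue the induced isometry is orientation-reversing because $t$ is — induces the automorphism.

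The main obstacle I anticipate is verifying the exact offset formula $s - b - f \pmod{s+1}$ and, relatedly, making sure the orientation bookkeeping in $(5)\implies(1)$ is airtight: a priori the order-$2$ homeomorphism of the sphere produced by Lemma~\ref{lem-welldefined} could be conjugate either to a rotation by $180^\circ$ or to a reflection, and I must rule out the former. The resolution is that the map $t$ upstairs is an orientation-reversing isometry of the plane, the quotient map $q$ is a local homeomorphism away from the special-hexagon centers (where it is a double branched cover), and these local orientation data force $f:Q\to Q$ to be orientation-reversing; an orientation-reversing finite-order homeomorphism of $S^2$ is conjugate to a reflection. The offset computation itself is a careful but routine lattice calculation, best organized by choosing coordinates on the hexagonal tiling as in Section~\ref{sec-background} and tracking the image of a single special hexagon under the reflection, then comparing with the defining recipe for $f$; I would present it as a short lemma-style computation rather than inline.
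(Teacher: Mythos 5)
Your cyclic structure $(1)\Rightarrow(2)\Rightarrow(3)\Rightarrow(4)\Rightarrow(5)\Rightarrow(1)$ matches the paper's, and your $(5)\Rightarrow(1)$ step (the induced map on $Q$ is orientation-reversing with a fixed point, hence conjugate to a reflection by \cite{Kerekjaro1919}) is exactly the paper's argument. The genuine gap is in $(3)\Rightarrow(4)$. You assert that if $(s,b,f)$ and $(s,b,s-b-f \pmod{s+1})$ are \emph{equivalent} signatures, then the reflection through the line bisecting the corresponding spine columns carries the configuration of special hexagons to itself. That is only correct when the two signatures are \emph{identical} as triples. Equivalence also allows $(s,b,s-b-f \pmod{s+1})$ to be one of the other two signatures of the trihex, read in a rotated spine direction --- this case really occurs, e.g.\ $(14,0,11)$ and $(14,0,3)$ are equivalent but distinct --- and then the vertical reflection sends the special-hexagon configuration to a genuinely different configuration (its vertical offset is $s-b-f\not\equiv f$), so it does \emph{not} take special hexagons to special hexagons. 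The paper closes this case by composing the reflection with the $120^\circ$ rotation carrying the reflected tiling back to the original; the composite is again a reflection, through a mirror bisecting spines in one of the other two directions, which is what (4) demands. As written, your argument establishes (4) only for trihexes possessing a self-mirror signature.

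A secondary weakness is in $(1)\Rightarrow(2)$: you propose to re-derive the mirror-offset formula by a lattice computation that you defer, and you rest part of the argument on the claim that the reflection must fix a triangle; the justification offered (a reflection fixes a great circle, so it ``cannot freely permute four triangles'') is not a proof --- a priori the mirror circle could pass only through hexagons while the four triangles are swapped in two $2$-cycles, so a fixed triangle would itself require an argument. The paper sidesteps both points by citing Proposition 3 of \cite{green2024polyhedra} (the mirror image of a trihex with signature $(s,b,f)$ has signature $(s,b,s-b-f \pmod{s+1})$) together with its Theorem 4 (any two signatures of the same trihex are equivalent); the same citation supplies the offset formula you also need in $(3)\Rightarrow(4)$. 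If you prefer your self-contained route, the deferred offset lemma is the real content of $(1)\Rightarrow(2)$ and must actually be carried out, and the fixed-triangle step should be either proved or removed in favor of the direct computation upstairs.
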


\begin{proof}
$(1) \implies (2)$: Suppose that the $(3,6)$-fullerene has mirror symmetry. That is, it is equivalent to an embedding on the sphere for which a reflection of the sphere induces a graph automorphism. Let $(s, b, f)$ be one of the $(3,6)$-fullerene's three signatures.
By Proposition 3 of \cite{green2024polyhedra}, the mirror image of a $(3,6)$-fullerene with signature $(s, b, f)$ is a $(3,6)$-fullerene with signature $(s, b, s-b-f )$. So the $(3,6)$-fullerene with signature $(s, b, s-b-f )$ also has signature $(s, b, f)$. By Theorem 4 of \cite{green2024polyhedra}, the signature $(s, b, s- b - f )$ is equivalent to $(s, b, f)$.

Clearly $(2) \implies (3)$. 

$(3) \implies (4)$: Suppose $(3)$ holds. Consider the hexagonal tiling that covers the $(3,6)$-fullerene with signature $(s, b, f)$ and orient it so that the spines corresponding to this signature are vertical. Let $m$ be a reflection of the plane through a vertical mirror line that bisects spines. The image of the hexagonal tiling under this reflection will have signature $(s, b, s-b-f )$, by Proposition 3 of \cite{green2024polyhedra}.

By assumption, the  signature $(s, b, s-b - f )$ is equivalent to the signature $(s, b, f)$. So either $(s, b, s-b-f )$ is equal to $(s, b, f)$, or else $(s, b, s-b-f )$ is an alternative equivalent signature for $(s, b, f)$. If the two signatures are equal, then the reflection $m$ through a vertical mirror line that bisects spines creates a hexagonal tiling is identical to the original, so (4) is proved. 

If, instead, $(s, b, s-b-f )$ is an alternative signature for $(s, b, f)$, then there is a rotation $r_{120}$ by $120^\circ$ clockwise or counterclockwise, with center of rotation in a special hexagon, that takes the mirror image hexagonal tiling to the original hexagonal tiling. So $r_{120} \circ m$ takes special hexagons to special hexagons. But $r_{120} \circ m$ is itself a reflection through a mirror at a $60^\circ$ angle to the original that goes through special hexagons  either in the SW to NE direction or in the NW to SE direction. So there is still a reflection through a mirror line that bisects spines that takes special hexagons to special hexagons. 

Clearly $(4) \implies (5)$.

$(5) \implies (1)$: Suppose that there is a reflection through a mirror line that takes special hexagons to special hexagons, in the hexagonal tiling that covers the $(3,6)$-fullerene. Call this reflection $m$. Let $Q$ be the quotient space of the plane $\mathbb{R}^2$ under the group of isometries generated by $180^\circ$ rotations around centers of special hexagons. Let $q$ be the quotient map $q:\mathbb{R}^2 \to Q$. By Lemma~\ref{lem-welldefined}, the map $m$ induces a well-defined map $f:Q \to Q$ defined by $f(x) = q \circ m(y)$, where $y$ is any point in $q^{-1}(x)$,  and the map $f$ induces a graph automorphism on the edges and vertices of $Q$. Since $m$ is orientation reversing and has a fixed point, $f$ is also orientation reversing with a fixed point. Since $Q$ is homeomorphic to a sphere, there is an embedding of the $(3,6)$-fullerene on a sphere such that an orientation reversing transformation with a fixed point is a graph automorphism. By \cite{Kerekjaro1919}, this transformation must be conjugate to a reflection of the sphere, so the $(3,6)$-fullerene is equivalent to an embedding on the sphere for which a reflection of the sphere induces a graph automorphism.

\end{proof}

By Proposition~\ref{prop-mirrorSymmetry}, a $(3,6)$-fullerene with signature $(s, b, f)$ has mirror symmetry if and only if the signature $(s, b, s-b-f )$ is either identical to $(s, b, f)$ or equivalent but not identical to $(s, b, f)$, where equivalence is given by the algebraic rules for alternative signatures in Equations~\ref{eqn-s2}-\ref{eqn-f3}. Recall that the third number in the signature is defined mod ${s+1}$, so $(s, b, s-b-f)$ is identical to $(s, b, f)$ means that $s-b-f \equiv f \pmod {s+1}$. We will call the signature $(s, b, s-b-f )$ the \emph{mirror signature} for the signature $(s, b, f)$. We will say that a signature $(s, b, f)$ is \emph{self-mirror} if its mirror signature $(s, b, s-b-f \pmod{s+1})$ is identical to $(s, b, f \pmod{s+1})$, not just equivalent. For example, $(4, 2, 1)$, $(14, 0, 11)$, and $(14, 0, 3)$ are all equivalent signatures for a $(3,6)$-fullerene with mirror symmetry. The signature $(4, 2, 1)$ is self-mirror, but the signatures $(14, 0, 11)$ and $(14, 0, 3)$ are not self-mirror. Instead, they are mirror signatures of each other.

\begin{lemma}
\label{lemma-mirrorTriples}
Suppose a $(3,6)$-fullerene has mirror symmetry and does not have 3-fold rotational symmetry. Then one of its three equivalent signatures is self-mirror, and the other two equivalent signatures are each other's mirror signatures.

\end{lemma}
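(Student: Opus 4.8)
The plan is to analyze how the mirror operation interacts with the three equivalent signatures of a trihex. Let the trihex have equivalent signatures $\Sigma_1 = (s_1, b_1, f_1)$, $\Sigma_2 = (s_2, b_2, f_2)$, and $\Sigma_3 = (s_3, b_3, f_3)$, and since it has mirror symmetry, by Proposition~\ref{prop-mirrorSymmetry} the mirror signature $\Sigma_i' = (s_i, b_i, s_i - b_i - f_i \pmod{s_i + 1})$ of each $\Sigma_i$ is equivalent to $\Sigma_i$, hence equivalent to the trihex, hence is one of $\{\Sigma_1, \Sigma_2, \Sigma_3\}$. Thus taking mirror signatures induces a permutation $\pi$ of the set $S = \{\Sigma_1, \Sigma_2, \Sigma_3\}$ (viewed as a multiset, but since there is no 3-fold symmetry the three are distinct by Proposition~\ref{prop-3foldSymmetry}, so $S$ genuinely has three elements). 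The first key observation is that $\pi$ is an involution: applying the mirror operation twice to a signature $(s,b,f)$ gives $(s, b, s - b - (s - b - f)) = (s,b,f)$ back, so $\pi^2 = \mathrm{id}$.

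Next I would invoke the classification of involutions on a 3-element set: $\pi$ is either the identity, or a single transposition fixing one element. If $\pi$ were the identity, then every $\Sigma_i$ would equal its own mirror signature, i.e.\ all three would be self-mirror. I would rule this out — or rather, show it forces 3-fold symmetry, contradicting the hypothesis. The cleaner route is geometric: if all three signatures are self-mirror, then in each of the three spine directions there is a reflection through a mirror line bisecting spines that takes special hexagons to special hexagons (this is essentially the $(2) \Leftrightarrow (4)$ direction of Proposition~\ref{prop-mirrorSymmetry}, applied to each signature/direction separately). Composing the reflections in two different directions (which meet at $60^\circ$ through the center of a special hexagon) yields a rotation by $120^\circ$ taking special hexagons to special hexagons, which by Proposition~\ref{prop-3foldSymmetry}(5) gives 3-fold rotational symmetry — contradiction. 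Therefore $\pi$ is a transposition: it fixes exactly one of the three signatures, which is then self-mirror, and swaps the other two, which are therefore each other's mirror signatures and not self-mirror (since a fixed point of $\pi$ is precisely a self-mirror signature, and the two swapped ones are not fixed).

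I expect the main obstacle to be the step ruling out $\pi = \mathrm{id}$, i.e.\ showing ``all three self-mirror'' implies 3-fold symmetry. The subtlety is making precise the claim that self-mirror in direction $i$ produces a reflection through a mirror bisecting spines in direction $i$ that preserves special hexagons — this requires unwinding Proposition 3 of \cite{green2024polyhedra} (used in the proof of Proposition~\ref{prop-mirrorSymmetry}) in the direction: if the reflection of the tiling across a vertical spine-bisecting line has the \emph{identical} signature, not merely an equivalent one, then that reflected tiling is literally the original tiling, so the reflection itself preserves special hexagons. One must handle the three directions consistently using the same tiling and a common special hexagon as the meeting point of the mirror lines; orienting the tiling so one spine family is vertical and noting the SW-NE and NW-SE spine families are then at $\pm 60^\circ$ handles the geometry. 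A purely algebraic alternative would be to show directly that $f_i \equiv s_i - b_i - f_i$ for all $i$ forces the congruence relations of Equations~\ref{eqn-s2}--\ref{eqn-f3} to collapse to $\Sigma_1 = \Sigma_2 = \Sigma_3$, but that looks more painful, so I would favor the geometric argument and cite the relevant pieces of Propositions~\ref{prop-3foldSymmetry} and \ref{prop-mirrorSymmetry} rather than re-deriving them.
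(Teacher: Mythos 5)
Your proposal is correct, and its skeleton matches the paper's: the mirror operation $F$ is an involution permuting the three equivalent signatures (via Proposition~\ref{prop-mirrorSymmetry}), so it is either the identity or a transposition, and the entire content of the lemma is ruling out the identity case under the no-3-fold-symmetry hypothesis. Where you genuinely diverge is in how that case is excluded. The paper argues algebraically: if every signature is self-mirror, then $f_1 \equiv -(f_1+b_1+1) \pmod{s_1+1}$ forces $j_2=j_3$ in Equations~\ref{eqn-s2} and~\ref{eqn-s3}, hence $s_1=s_2=s_3$ and $b_1=b_2=b_3$; then all three offsets satisfy $2x \equiv -(b_1+1) \pmod{s_1+1}$, which has at most two solutions, so two signatures coincide and Proposition~\ref{prop-3foldSymmetry} gives 3-fold symmetry, a contradiction. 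You instead argue geometrically: each self-mirror signature yields a reflection of the covering tiling across a spine-bisecting line in the corresponding direction that preserves special hexagons, and composing two such reflections in directions meeting at $60^\circ$ gives a $120^\circ$ rotation preserving special hexagons, so condition (5) of Proposition~\ref{prop-3foldSymmetry} applies. This works (and in fact only two of the three self-mirror directions are needed), and the worry you flag about arranging a common special hexagon as the meeting point of the mirror lines is unnecessary: any two non-parallel mirror lines intersect somewhere, their composition is a rotation by twice the angle between them about that intersection point, and condition (5) does not require the rotocenter to lie in a special hexagon. Your route does lean on the same unwinding of ``identical mirror signature implies the reflection itself preserves special hexagons'' that the paper's own proof of $(3)\Rightarrow(4)$ in Proposition~\ref{prop-mirrorSymmetry} already asserts, so it sits at the same level of rigor; the paper's algebraic route buys independence from that geometric step, while yours is shorter and makes the symmetry content of the statement more transparent.
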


\begin{proof}
If a $(3,6)$-fullerene has mirror symmetry and does not have 3-fold rotational symmetry, then by Proposition~\ref{prop-mirrorSymmetry}, the function $F$ that takes a signature $(s, b, f)$ to its mirror signature $(s, b, s -  b - f )$ must permute the $(3,6)$-fullerene's three signatures. Since $F^2$ is the identity,  $F$ must either fix all three equivalent signatures or it must swap two signatures and fix the third. 

Suppose, for contradiction, that a $(3,6)$-fullerene has mirror symmetry and does not have 3-fold rotational symmetry and that $F$ fixes all three equivalent signatures. 
In particular, $f_1 \equiv (s_1 - b_1 - f_1) \pmod {s_1 + 1}$. Let $j_2$ be the order of $f_1$ and $j_3$ be the order of $f_1 + b_1 + 1$ in $\mathbb{Z}_{s_1 + 1}$ as in equations~\ref{eqn-s2} and \ref{eqn-s3}. Since $f_1 = s_1 - b_1 - f_1 = -(f_1 + b_1 + 1) \pmod {s_1 + 1}$, the order of $f_1$ in $\mathbb{Z}_{s_1 + 1}$ is equal to the order of $-(f_1 + b_1 + 1)$, which is equal to the order of $(f_1 + b_1 + 1)$. Therefore, $j_2 = j_3$. By Equations \ref{eqn-s2} and \ref{eqn-s3}, $s_2$ and $s_3$ are equal. By Equations \ref{eqn-b2} and \ref{eqn-b3}, $b_2$ and $b_3$ are equal. 

The same argument, with $(s_2, b_2, f_2)$ in the role of $(s_1, b_1, f_1)$ shows that $s_3$ and $s_1$ are equal and $b_3$ and $b_1$ are equal; thus $s_1 = s_2 = s_3$ and $b_1 = b_2 = b_3$. 

Since each signature is its own mirror signature, we have that $f_i \equiv s_i - b_i - f_i \pmod {s_i + 1} $, or equivalently, $2f_i \equiv  - (b_i +1)  \pmod {s_i + 1}$. Since the $s_i$ and $b_i$ are all equal, all three $f_i$ satisfy the equation $2x \equiv -(b_1 + 1) \pmod{s_1 + 1} $. But this equation can have at most $2$ distinct solutions mod $(s_1 + 1)$, since the greatest common divisor of $2$ and $(s_1 + 1)$ is at most 2. Therefore, at least two of the $f_i$ must be equal, meaning that at least two of the signatures are the same. Therefore, by Proposition~\ref{prop-3foldSymmetry}, all three of the signatures are the same: we have coinciding signatures. So the $(3,6)$-fullerene has 3-fold rotational symmetry, a contradiction.

We can conclude that if the $(3,6)$-fullerene does not have 3-fold rotational symmetry, then the function $F$ that takes each signature to its mirror signature must swap two signatures and fix the third. 

\end{proof}

\begin{corollary}
Let $G$ be the map that takes a signature to the $(3,6)$-fullerene that it represents.
\label{cor-geomAndSigs}
\begin{enumerate}

\item $G$ gives a bijection from signatures that are self-mirror to $(3,6)$-fullerenes with mirror symmetry.

\item $G$ gives a bijection from signatures that are both self-mirror and coinciding to $(3,6)$-fullerenes that have both mirror symmetry and 3-fold rotational symmetry.

\end{enumerate}
\label{cor-symmBij}
\end{corollary}

\begin{proof}
(1): By Proposition~\ref{prop-mirrorSymmetry}, a self-mirror signature always represents a $(3,6)$-fullerene with mirror symmetry, so the image of $G$ lies within the specified set of $(3,6)$-fullerenes. 

Every $(3,6)$-fullerene that has mirror symmetry but does not have 3-fold rotational symmetry has a unique self-mirror signature that represents it by Lemma~\ref{lemma-mirrorTriples}. Every $(3,6)$-fullerene that has mirror symmetry and does have 3-fold rotational symmetry also has a unique self-mirror signature, by the following argument.  By Proposition~\ref{prop-3foldSymmetry}, the $(3,6)$-fullerene has coinciding signatures; that is, all three of its signatures are identical. By Proposition~\ref{prop-mirrorSymmetry}, the mirror signature of this coinciding signature is equivalent to itself. But the only way a signature can be equivalent to a coinciding signature is if it identical to the signature, so the signature must be self-mirror. 

So every $(3,6)$-fullerene with mirror symmetry has a unique self-mirror signature that represents it, whether or not it has 3-fold rotational symmetry. The map that takes a $(3,6)$-fullerene to its unique self-mirror signature is an inverse to $G$, and $G$ must be a bijection. 

(2): This follows from (1) and the fact that a $(3,6)$-fullerene has coinciding signatures if and only if it has 3-fold rotational symmetry (Proposition~\ref{prop-3foldSymmetry}).

\end{proof}
%%%%%%%%%%%%%%%%%%%%%%%%%%%%%%%%%%%%%%%%%%%%%%%%
\section{Counts of (3,6)-fullerenes with mirror symmetry}
\label{sec-mirrorCounts}
Let $\mu(V)$ be the number of $(3,6)$-fullerenes with $V$ vertices with mirror symmetry. Let $\nu(V)$ be the number of $(3,6)$-fullerenes with $V$ vertices that have both mirror symmetry and 3-fold rotational symmetry. By Corollary~\ref{cor-symmBij}, $\mu(V)$ is also the number of signatures $(s, b, f)$  that are self-mirror that represent a $(3,6)$-fullerene with $V$ vertices, and $\nu(V)$ is also the number of coinciding signatures  that are self-mirror that represent a $(3,6)$-fullerene with $V$ vertices.

The propositions that follow give formulas for $\mu(V)$ and $\nu(V)$ when $V$ is a multiple of 4. %The formula for $\mu(V)$ will be helpful in finding a formula for $\gamma(V)$. 
If $V$ is not a multiple of 4, there are no $(3,6)$-fullerenes with $V$ vertices by Equation~\ref{eqn-V}, so all of these counts are 0. 

\begin{lemma}
\label{lem-pMustBe3}
    For a positive integer $n$, there exists an integer $x$, such that $x^2 + x + 1 \equiv 0 \pmod {n}$ and $2x + 1 \equiv 0 \pmod n$ if and only if $n = 3$ or $n = 1$.
\end{lemma}

\begin{proof}
If $n = 1$, then $x = 0$ satisfies both $x^2 + x + 1 \equiv 0 \pmod n$ and $2x + 1 \equiv 0 \pmod n$. If $n = 3$, then $x = 1$ satisfies both equations. 

Conversely, suppose that $x^2 + x + 1 \equiv 0 \pmod {n}$ and $2x + 1 \equiv 0 \pmod n$. Then $2x + 1 = rn$ for some integer $r$, so $x = \dfrac{rn - 1}{2}$, and $x^2 + x + 1 = \left( \dfrac{rn -1}{2} \right)^2 + \dfrac{rn -1}{2} + 1 = \dfrac{r^2 n^2 + 3}{4} $. Since this is an integer congruent to $0 \pmod {n}$, $4n$ must divide $r^2 n^2 + 3$. So $n$ must divide 3, so $n = 1$ or $n = 3$.  
\end{proof}

\begin{proposition}
Suppose  $V$ is a multiple of $4$ and $\frac{V}{4}$  has prime factorization $\displaystyle 3^y \prod_{i = 1}^z p_i^{k_i}$, where the $p_i$ are distinct primes other than $3$, with $z \geq 0$, $y \geq 0$, and $k_i \geq 1$ for $1 \leq i \leq z$ . Then $\nu(V)$, the number of $(3,6)$-fullerenes with $V$ vertices with mirror symmetry and 3-fold rotational symmetry, is 1 if all exponents $k_i$ are even for $1 \leq i \leq z$, and 0 otherwise.
 
 A $(3,6)$-fullerene has both mirror symmetry and 3-fold rotation symmetry if and only if its signature is either of the form $(m-1, m-1, 0)$  or else of the form \mbox{$(3m-1, m-1, m)$} for some $m \geq 1$.
 \label{prop-nu}
\end{proposition}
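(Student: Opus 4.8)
The plan is to reduce everything to counting certain signatures, using Corollary~\ref{cor-symmBij}(2): $\nu(V)$ equals the number of triples $(s,b,f)$ with $4(s+1)(b+1)=V$ that are \emph{both} coinciding and self-mirror. So I would work entirely on the algebraic side, combining Theorem~\ref{thm-dupSigsFmla} with the self-mirror condition.

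The first step is to translate ``self-mirror'' into a congruence. A signature $(s,b,f)$ is self-mirror iff its mirror signature $(s,b,\,s-b-f \bmod (s+1))$ equals it, i.e. $f \equiv s-b-f \pmod{s+1}$. Since $s \equiv -1 \pmod{s+1}$, this is equivalent to $2f + b + 1 \equiv 0 \pmod{s+1}$. Next I would intersect this with the coinciding condition: by Theorem~\ref{thm-dupSigsFmla}, a coinciding signature has the form $(tn-1,\,n-1,\,gn)$ with integers $t \geq 1$, $n \geq 1$, $0 \leq g < t$, and $g^2+g+1 \equiv 0 \pmod t$ (I write the parameter $b+1$ as $n$ to avoid clashing with the exponent $m$ of $3$ in the statement). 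Substituting $s+1 = tn$, $b = n-1$, $f = gn$ into the self-mirror condition gives $n(2g+1) \equiv 0 \pmod{tn}$, i.e. $2g+1 \equiv 0 \pmod t$. Hence a coinciding signature is self-mirror precisely when its parameter $g$ solves both $g^2+g+1 \equiv 0$ and $2g+1 \equiv 0 \pmod t$. By Lemma~\ref{lem-pMustBe3} such a $g$ exists only when $t = 1$ or $t = 3$, and in those cases $g$ is uniquely determined ($g=0$, resp.\ $g=1$). This yields exactly the two families in the statement: $(n-1,n-1,0)$ when $t=1$ and $(3n-1,n-1,n)$ when $t=3$, proving the ``if and only if'' claim.

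For the count, I would impose $4(s+1)(b+1)=V$. The first family requires $n^2 = V/4$ and the second requires $3n^2 = V/4$; each has at most one solution $n \geq 1$. Writing $V/4 = 3^m p_1^{k_1}\cdots p_r^{k_r}$: if some $k_i$ is odd then neither $V/4$ nor $V/12$ is a perfect square, so there are no self-mirror coinciding signatures and $\nu(V)=0$; if all $k_i$ are even, then exactly one of $V/4$, $V/12$ is a perfect square (according as $m$ is even or odd), and it produces exactly one self-mirror coinciding signature, so $\nu(V)=1$. The two families never both contribute, since ``$V/4$ is a square'' and ``$V/12$ is a square'' are mutually exclusive, so there is no double counting.

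The computations here are all routine; the one step that needs care is the number-theoretic heart of the argument — showing the simultaneous congruences $g^2+g+1\equiv 2g+1\equiv 0 \pmod t$ force $t\in\{1,3\}$ with a unique $g$ — and this is precisely Lemma~\ref{lem-pMustBe3} together with a one-line check of the cases $t=1$ and $t=3$. The only remaining subtlety is bookkeeping: confirming that the parity of the exponent $m$ of $3$ merely selects which of the two families occurs and never affects whether $\nu(V)$ is $0$ or $1$.
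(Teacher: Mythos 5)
Your proposal is correct and follows essentially the same route as the paper: reduce via Corollary~\ref{cor-symmBij}(2) to counting coinciding self-mirror signatures, use Theorem~\ref{thm-dupSigsFmla} to write them as $(tn-1,n-1,gn)$, reduce the self-mirror condition to $2g+1\equiv 0 \pmod t$, invoke Lemma~\ref{lem-pMustBe3} to force $t\in\{1,3\}$ with unique $g$, and then count via whether $V/4$ or $V/12$ is a perfect square. No substantive differences from the paper's argument.
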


\begin{proof} 
By Corollary~\ref{cor-geomAndSigs}, the $(3,6)$-fullerenes wih mirror symmetry and 3-fold rotational symmetry are exactly the $(3,6)$-fullerenes with coinciding and self-mirror signatures. By Theorem~\ref{thm-dupSigsFmla}, any coinciding signature for a $(3,6)$-fullerene with $V$ vertices is of the form $(cm-1, m-1, gm)$ where $c \geq 1$, $m \geq 1$, $0 \leq g < c$, $g^2 + g + 1 = 0 \pmod c$, and $cm^2 = \dfrac{V}{4}$. 
The mirror signature for this signature is $(cm-1, m-1, (cm-1) - (m-1) - gm)$. This mirror signature is identical to the original if and only if $(cm-1) - (m-1) - gm \equiv gm \pmod {cm}$ i.e. $2gm + m \equiv 0 \pmod {cm}$. This happens if and only if $2g + 1 \equiv 0 \pmod c$. By Lemma~\ref{lem-pMustBe3}, $g$ satisfies both $g^2 + g + 1 \equiv 0 \pmod c$ and $2g + 1 \equiv 0 \pmod c$ if and only if $c = 1$ or $c = 3$. When $c = 1$, $g = 0$ is the only possibility, and the signature is of the form $(m-1, m-1, 0)$, and when $c = 3$, $g = 1$ is the only possibility and the signature is of the form $(3m-1, m-1, m)$. Since there is exactly one signature for $c = 1$ and exactly one signature for $c = 3$ and no signatures for any other values of $c$, $\nu(V)$ is equal to the number of ways of factoring $\dfrac{V}{4}$ into $cm^2$ so that $c = 1$ or $c = 3$.

If any of the exponents $k_i$ are odd, then it is not possible to factor $\dfrac{V}{4}$ as $c m^2$ with $c = 1$ or $3$, so $\nu(V) = 0$. If all exponents $k_i$ are even, then there is one way to factor $\dfrac{V}{4}$ as $c m^2$ with $c = 1$ if $y$ is even and one way to factor $\dfrac{V}{4}$ as $cm^2$ with $c = 3$ if $y$ is odd.

Therefore, $\nu(V)$ is 1 if all exponents $k_i$ are even, and 0 otherwise.

\end{proof}

See Figure~\ref{fig:3foldAndMirror} for examples of $(3,6)$-fullerenes with both 3-fold rotational symmetry and mirror symmetry, represented by the hexagonal tilings that cover them.

\begin{figure}[ht]
%    \begin{subfigure}{.5\textwidth}
\centering
    \includegraphics[height = 4 cm]{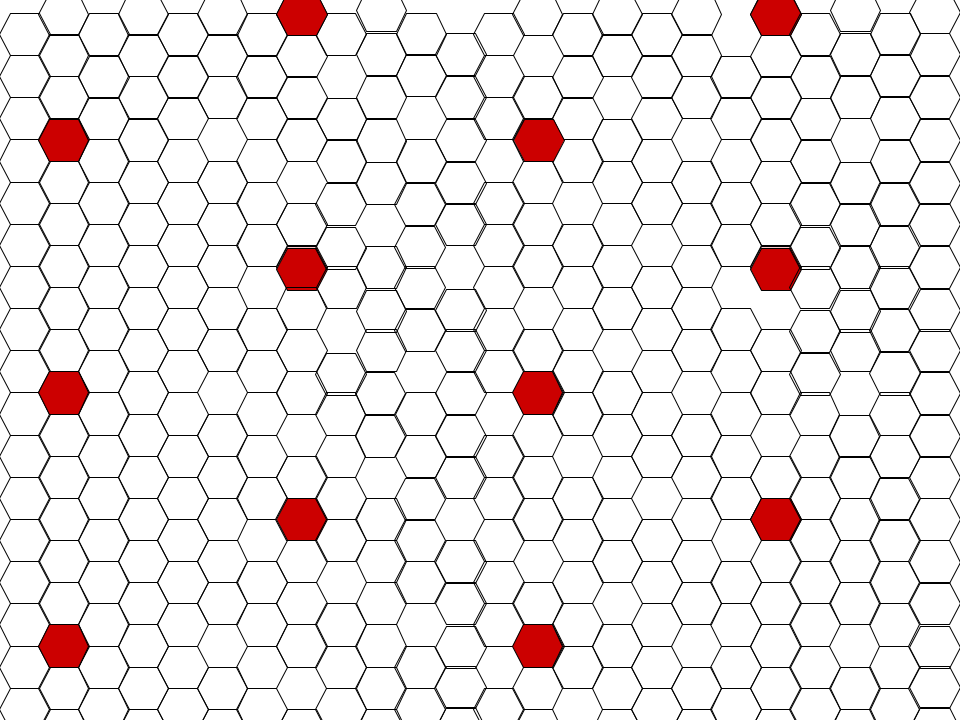}
    \hspace{1 cm}
    \includegraphics[height = 4 cm]{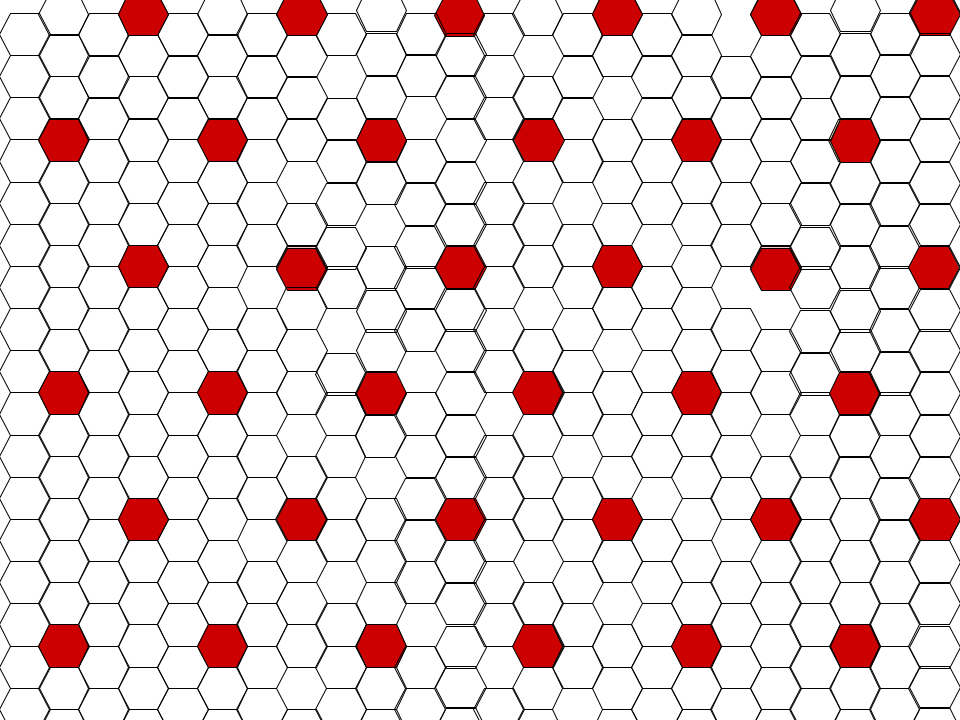}
\caption{Examples of $(3,6)$-fullerenes with both 3-fold rotational symmetry and mirror symmetry. The left figure shows the hexagonal cover of the $(3,6)$-fullerene with signature $(5, 5, 0)$, corresponding to $c = 1$. The right figure shows the hexagonal cover of the $(3,6)$-fullerene with signature $(5, 1, 2)$, corresponding to $c = 3$. Special hexagons are shaded. }
\label{fig:3foldAndMirror}
\end{figure}

Next, consider $\mu(V)$, the number of $(3,6)$-fullerenes with mirror symmetry. 

\begin{proposition}
Suppose $V$ is a multiple of $4$ and $\frac{V}{4}$ has prime decomposition $\displaystyle 2^x \prod_{i = 1}^z p_i^{n_i}$, where the $p_i$ are distinct primes other than 2, $z \geq 0$,  $x \geq 0$, and $n_i \geq 1$ for $1 \leq i \leq z$ . Then $\mu(V)$, the number of $(3,6)$-fullerenes with $V$ vertices with mirror symmetry, is given by  \\ $$\displaystyle \mu(V) = 
    (2x - 1) \cdot \prod_{i = 1}^z (n_i + 1)  \ \ \ \ \  \text{ if }x > 0 \text{ and}$$  $$\displaystyle \mu(V) =  \prod_{i = 1}^z (n_i + 1) \ \ \ \ \text{ if } x = 0.$$
    \label{prop-mu}
\end{proposition}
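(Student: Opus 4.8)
The plan is to invoke Corollary~\ref{cor-symmBij}(1), which identifies $\mu(V)$ with the number of \emph{self-mirror} signatures $(s,b,f)$ that represent a trihex with $V$ vertices. First I would rewrite the self-mirror condition $f \equiv s-b-f \pmod{s+1}$, using $s \equiv -1 \pmod{s+1}$, as the congruence $2f \equiv -(b+1) \pmod{s+1}$. Putting $a = s+1$ and $c = b+1$, a self-mirror signature with $V$ vertices amounts to an ordered factorization $ac = V/4$ with $a,c \geq 1$, together with a value $f \in \{0,1,\dots,a-1\}$ solving $2f \equiv -c \pmod a$. Since $\{0,\dots,a-1\}$ is a complete residue system modulo $a$, I can count such $f$ with the standard linear-congruence count.

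Next I would apply that count: $2f \equiv -c \pmod a$ is solvable exactly when $\gcd(2,a) \mid c$, and then has exactly $\gcd(2,a)$ solutions in range. This partitions the factorizations $V/4 = ac$ into three types: $a$ odd (one self-mirror $f$); $a$ even with $c$ even (two); $a$ even with $c$ odd (none). Hence
\[
\mu(V) \;=\; \#\{\text{odd divisors } a \text{ of } V/4\} \;+\; 2\cdot\#\{\text{ordered } ac = V/4 : a,\,c \text{ both even}\}.
\]

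Then comes the divisor bookkeeping, using the prime decomposition $V/4 = 2^{w} M$ with $M = p_1^{n_1} \cdots p_z^{n_z}$ odd. The odd divisors of $V/4$ are precisely the divisors of $M$, so there are $\prod_{i=1}^{z}(n_i+1)$ of them. For the factorizations with $a$ and $c$ both even, write $a = 2^{j} d$ with $d \mid M$; then $c = 2^{\,w-j}(M/d)$ is even iff $1 \leq j \leq w-1$, so $j$ has $w-1$ choices (none when $w \leq 1$) while $d$ ranges over the $\prod_{i=1}^{z}(n_i+1)$ divisors of $M$. Combining, $\mu(V) = \prod_{i=1}^{z}(n_i+1) + 2(w-1)\prod_{i=1}^{z}(n_i+1) = (2w-1)\prod_{i=1}^{z}(n_i+1)$ when $w > 0$; when $w = 0$ every divisor of $V/4$ is odd, so $\mu(V) = \prod_{i=1}^{z}(n_i+1)$, which is the stated formula.

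I do not anticipate a real obstacle. The two points to watch are confirming that counting $f$ in the range $0 \leq f \leq s$ coincides with counting residues modulo $s+1$, and checking that the degenerate factor $a = 1$ (where the congruence is vacuous but contributes its single solution $f = 0$) is correctly absorbed into the case where $a$ is odd. The rest is elementary divisor counting.
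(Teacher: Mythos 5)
Your proposal is correct and follows essentially the same route as the paper: reduce via Corollary~\ref{cor-symmBij}(1) to counting self-mirror signatures, rewrite the condition as $2f \equiv -(b+1) \pmod{s+1}$, split the factorizations of $V/4$ by the parities of $s+1$ and $b+1$, and do the same divisor bookkeeping. Your explicit linear-congruence count and the remarks about the complete residue system and the $a=1$ case are just slightly more detailed versions of the paper's argument.
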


\begin{proof} By Corollary~\ref{cor-geomAndSigs}, the $(3,6)$-fullerenes with mirror symmetry are the $(3,6)$-fullerenes with a self-mirror signature, that is, $(3,6)$-fullerenes with a signature $(s, b, f)$ such that \mbox{$s-b-f \equiv f \pmod {s+1}$}, i.e. $2f \equiv -(b+1) \pmod {s+1}$. In general, the number of solutions for $x \in \mathbb{Z}_d$ in an equation $ax \equiv c \pmod d$ is equal to $\gcd(a, d)$ if $\gcd(a, d)$ divides $c$, and $0$ otherwise.   Therefore, the equation $2f \equiv -(b+1) \pmod {s+1}$ has two solutions for $f $ if $s+1$ and $b+1$ are both even, since then $\gcd(2, s+ 1) = 2$ and $2 \mid (b+1)$. The equation has one solution if $s+1$ is odd, since then $\gcd(2, s+1) = 1$ and $1 \mid (b+1)$. The equation has  no solutions if $s+1$ is even and $b+1$ is odd, since then $\gcd(2, s+1) = 2$ but $2 \nmid (b+1)$. 

Consider the prime decomposition $\displaystyle 2^x \prod_{i = 1}^z p_i^{n_i}$, where the $p_i$ are primes other than 2. 

Case 1: $x = 0$. Then all ways of factoring $\dfrac{V}{4}$ as $(s+1)(b+1)$ have an odd value for $s+1$ and therefore give one solution for $f$. The number of ways of factoring $\dfrac{V}{4}$ into two factors is $\displaystyle \prod_{i = 1}^z (n_i +1)$, so there are $\displaystyle \prod_{i = 1}^z (n_i +1)$ signatures that are their own mirror signature. 

Case 2: $x > 0$, i.e. there are factors of 2. Then there are $x - 1$ ways to divide up the 2's so that both $s+1$ and $b+1$ are even, 1 way so that $s+1$ is odd and $b+1$ is even, and 1 way so that $s+1$ is even and $b+1$ is odd. The other factors can be divided up in any way, so the total number of solutions is 
$$\displaystyle 2(x - 1) \cdot \prod_{i = 1}^z (n_i + 1)   + 1 \cdot \prod_{i = 1}^z (n_i + 1)   + 0 $$ This simplifies to 
$\displaystyle (2x - 1)\prod_{i = 1}^z (n_i + 1).  $
\end{proof}
A recursive version of this formula can be found in Theorem 4.4 of J. Huijben's thesis \cite{huijben2016tetrahedral}.

%%%%%%%%%%%%%%%%%%%%%%%%%%%%%%%%%%%%%%%%%%%%%%%%%
\section{The number of graph isomorphism classes of (3,6)-fullerenes }
\label{sec-(3,6)-fullereneGraphs}

Suppose we have a 3-connected $(3,6)$-fullerene $T_1$ with a fixed embedding in the plane. Let $T_2$ be any other $(3, 6)$-fullerene, with a fixed embedding in the plane, that is isomorphic to $T_1$ as a graph. By a theorem of Whitney \cite{whitneyCongruentGraphs1932}, there is a homeomorphism of the plane whose restriction to $T_1$ is a graph isomorphism onto $T_2$. This homeomorphism could be either orientation-preserving or orientation-reversing. If there exist both orientation-preserving and orientation-reversing homeomorphisms from the embedding of $T_1$ to the embedding of $T_2$, then the same is true for any $(3, 6)$-fullerene that is isomorphic as a graph to $T_1$, by composing homeomorphisms. So there is only one $(3,6)$-fullerene equivalence class within that graph isomorphism class, represented by both $T_1$ and its mirror image. In that case, we say that the $(3,6)$-fullerene is \emph{achiral}. If, instead, there exists an orientation-preserving homeomorphism from the embedding of $T_1$ to the embedding of $T_2$, but no orientation-reversing homeomorphism, or vice versa, then the same is true for any $(3,6)$-fullerene that is isomorphic as a graph to $T_1$, and there are exactly two distinct $(3, 6)$-fullerene equivalence classes within the same graph isomorphism class, represented by $T_1$ and its mirror image. In that case, we say that the $(3,6)$-fullerene $T_1$ is \emph{chiral}.

If the $(3,6)$-fullerene $T_1$ is not 3-connected, then it is the 2-connected godseye described in Section~\ref{sec-intro}. If $T_2$ is another $(3,6)$ embedded in the plane, whose graph is isomorphic to that of $T_1$, then by Whitney's 2-isomorphism theorem \cite{whitneyClassificationOfGraphs1933}, $T_1$ can be transformed by a series of flips along pairs of separating vertices to create an embedding $T_3$ that is equivalent to $T_2$. Since these flips do not change the face structure of $T_1$, $T_1$ is already equivalent to $T_2$. Since $T_1$ is also equivalent to its mirror image, there is one $(3,6)$-fullerene equivalence class for the graph isomorphism class of a godseye.

Let $\gamma(V)$ represent the number of graph isomorphism classes for $(3, 6)$-fullerenes, that is, the number of equivalence classes for $(3, 6)$-fullerenes where the equivalence relation is being isomorphic as graphs. To count $\gamma(V)$, we can use the count of equivalence classes of $(3, 6)$-fullerenes from Section~\ref{sec-(3,6)-fullereneCounts} and  collapse two equivalence classes corresponding to the mirror images  of chiral $(3, 6)$-fullerenes into a single graph isomorphism class.

\begin{lemma}
The number of graph isomorphism classes for $(3,6)$-fullerenes with $V$ vertices is 

 $\gamma(V) = \dfrac{1}{6} \sigma(V) + \dfrac{1}{3} \delta(V) +  \dfrac{1}{2}\mu(V)$.
\label{lem-$(3,6)$-fullereneGraphCount}
\end{lemma}

\begin{proof}

By Theorem~\ref{thm-$(3,6)$-fullereneCount}, the total number of $(3,6)$-fullerenes with $V$ vertices is $\alpha(V) = \dfrac{1}{3}\sigma(V) + \dfrac{2}{3}\delta(V)$.  The number of $(3,6)$-fullerenes with mirror symmetry is $\mu(V)$. Therefore, the number of $(3,6)$-fullerenes without mirror symmetry is $\dfrac{1}{3}\sigma(V) + \dfrac{2}{3}\delta(V)  - \mu(V) $.  For $(3,6)$-fullerenes with mirror symmetry, each $(3,6)$-fullerene corresponds to one graph isomorphism class, while for $(3,6)$-fullerenes without mirror symmetry, two $(3,6)$-fullerenes correspond to the same graph isomorphism class. Therefore, the number of $(3,6)$-fullerene graph isomorphism classes is  $ \gamma(V) = \mu(V) + \dfrac{1}{2}\left(\dfrac{1}{3}\sigma(V) + \dfrac{2}{3}\delta(V) - \mu(V)  \right)$. This simplifies to  \\
$\gamma(V) = \dfrac{1}{6} \sigma(V) + \dfrac{1}{3} \delta(V) +  \dfrac{1}{2}\mu(V)$.

\end{proof}
This formula is also proved using Burnside's lemma as Theorem 4.2 in J. Huijben's thesis \cite{huijben2016tetrahedral}. 

A precise, albeit complicated, formula can be obtained by combining Lemma~\ref{lem-$(3,6)$-fullereneGraphCount} with the  formulas for $\sigma(V)$, $\delta(V)$,  and $\mu(V)$.

\begin{theorem}
Suppose $V$ is multiple of $4$ and $\dfrac{V}{4}$ has prime factorization $\displaystyle 2^x 3^y \prod_{i = 1}^z p_i^{k_i} \prod_{j = 1}^w q_j^{\ell_j}$ where the $p_i$ are distinct primes congruent to $1 \pmod 3$,  the $q_j$ are distinct primes greater than $2$ that are congruent to $2 \pmod 3$, $x, y, z, w \geq 0$, $k_i \geq 1$ for $1 \leq i \leq z$, and $\ell_j \geq 1$ for $1 \leq j \leq w$. 

The number of graph isomorphism classes for $(3,6)$-fullerenes with $V$ vertices is given by the following formulas:

\begin{enumerate}
\item If $x = 0$ and all $\ell_j$ are even, 
\begin{align*}\displaystyle \gamma(V) &=   \dfrac{3^{y+1} - 1}{12} \prod_{i = 1}^z \dfrac{p_i^{k_i + 1} - 1}{p_i - 1} \prod_{j = 1}^w \dfrac{q_j^{\ell_j + 1} - 1}{q_j - 1}   \\
&+ 
 \dfrac{1}{3}\prod_{i = 1}^{z} (k_i + 1)  + \dfrac{1}{2} (y+1)
 \prod_{i = 1}^{z} (k_i + 1) \prod_{j = 1}^w (\ell_j + 1) 
\end{align*}

\item If $x > 0$, $x$ is even, and all $\ell_j$ are even, 

\begin{align*}
\displaystyle \gamma(V)  &=  \dfrac{(2^{x+1} - 1)(3^{y+1} - 1)}{12} \prod_{i = 1}^z \dfrac{p_i^{k_i + 1} - 1}{p_i - 1} \prod_{j = 1}^w \dfrac{q_j^{\ell_j + 1} - 1}{q_j - 1}    \\ 
 &+  \dfrac{1}{3} 
 \prod_{i = 1}^{z} (k_i + 1)  + \dfrac{(2x - 1)}{2}(y+1)\prod_{i = 1}^{z} (k_i + 1)\prod_{j = 1}^w (\ell_j + 1) 
 \end{align*}
\item If $x = 0$ and $\ell_j$ is odd for some $1 \leq j \leq w$, 
\begin{align*}
    \displaystyle \gamma(V) &=  \dfrac{(3^{y+1} - 1)}{12} \prod_{i = 1}^z \dfrac{p_i^{k_i + 1} - 1}{p_i - 1} \prod_{j = 1}^w \dfrac{q_j^{\ell_j + 1} - 1}{q_j - 1}     \\
&+ \dfrac{1}{2}(y+1)\prod_{i = 1}^{z} (k_i + 1)\prod_{j = 1}^w (\ell_j + 1) 
\end{align*}

\item If $x > 0$ and either $x$ is odd or $\ell_j$ is odd  for some $1 \leq j \leq w$, then
\begin{align*} \gamma(V) &=  \dfrac{(2^{x+1} - 1)(3^{y+1} - 1)}{12} \prod_{i = 1}^z \dfrac{p_i^{k_i + 1} - 1}{p_i - 1} \prod_{j = 1}^w \dfrac{q_j^{\ell_j + 1} - 1}{q_j - 1}     \\
&+ \dfrac{(2x - 1)}{2}(y+1)\prod_{i = 1}^{z} (k_i + 1)\prod_{j = 1}^w (\ell_j + 1) 
\end{align*}
\end{enumerate}
\label{thm-gamma}
\end{theorem}

\begin{proof}
These formulas follow directly from Lemma~\ref{lem-$(3,6)$-fullereneGraphCount}, the formula for $\delta(V)$  in Theorem~\ref{thm-delta}, the formula for $\mu(V)$  in Proposition~\ref{prop-mu}, and the formula for $\sigma(V)$ in Equation~\ref{eqn-sigma}.

\end{proof}

We will briefly consider the number of graph isomorphism classes for $(3,6)$-fullerenes with mirror symmetry, 3-fold rotational symmetry, and both types of symmetry. The number of graph isomorphism classes of $(3,6)$-fullerenes with mirror symmetry is the same as the number of $(3,6)$-fullerenes with mirror symmetry, and is given in Proposition~\ref{prop-mu}. Similarly, the number of graph isomorphism classes of $(3,6)$-fullerenes with both 3-fold rotational symmetry and mirror symmetry is given in Proposition~\ref{prop-nu}.

The number of graph isomorphism classes for $(3,6)$-fullerenes with 3-fold rotational symmetry follows directly from the count of $(3,6)$-fullerenes with 3-fold rotational symmetry:

\begin{corollary}
Suppose $V$ is a multiple of $4$ and $\frac{V}{4}$  has prime factorization $\displaystyle 3^y \prod_{i = 1}^z p_i^{k_i} \prod_{j = 1}^{w} q_j^{\ell_j}$, with distinct primes $p_i \equiv 1 \pmod 3$ and distinct primes $q_j \equiv 2 \pmod 3 $, where $z, w,$ and the exponent $y$ can be any integer $\geq 0$, and the other exponents are all $\geq 1$. Then the number of graph isomorphism classes for $(3,6)$-fullerenes with $V$ vertices and 3-fold rotational symmetry  is 0 if $\ell_j$ is odd for any $j$ with $1 \leq j \leq w$. If all $\ell_j$ are even, but some $k_i$ are odd, then this number is  $\displaystyle \dfrac{1}{2} \prod_{i = 1}^z (k_i + 1)$. If all $k_i$ and $\ell_j$ are even, then this number is $\displaystyle \dfrac{1}{2} \prod_{i = 1}^z (k_i + 1) + \dfrac{1}{2}$.
\label{cor-graphIsoWith3FoldSymm}
\end{corollary}

\begin{proof}
For $(3,6)$-fullerenes with mirror symmetry, each
$(3,6)$-fullerene corresponds to one graph isomorphism class, while for $(3,6)$-fullerenes without mirror symmetry, two $(3,6)$-fullerenes correspond to the same graph isomorphism class. Therefore, the number of $(3,6)$-fullerene graph isomorphism classes for $(3,6)$-fullerenes with $V$ vertices with 3-fold rotational symmetry is $\dfrac{1}{2} (\delta(V) - \nu(V)) + \nu(V)$, which simplifies to $\dfrac{1}{2} \delta(V) + \dfrac{1}{2} \nu(V)$. 
The given formulas now follow from the formula for $\delta(V)$  in Theorem~\ref{thm-delta} and the formula for $\nu(V)$ in Proposition~\ref{prop-nu}.

\end{proof}

%%%%%%%%%%%%%%%%%%%%%%%%%%%%%%%%%%%%%%%%%%%%%%%%
\section{Tabulated values and examples}
\label{sec-examples}

Recall that $\sigma(V)$ is the number of signatures of $(3,6)$-fullerenes with $V$ vertices, $\alpha(V)$ is the number of $(3,6)$-fullerenes with $V$ vertices, $\delta(V)$ is the number of $(3, 6)$-fullerenes with 3-fold symmetry, $\mu(V)$ is  the number of $(3,6)$-fullerenes with mirror symmetry,  $\nu(V)$ is the number of $(3,6)$-fullerenes with both 3-fold rotational symmetry and mirror symmetry, and $\gamma(V)$ is the number of graph isomorphism classes of $(3,6)$-fullerenes with $V$ vertices. Tables~\ref{table-sample1} and \ref{table-sample2} show the computations of these quantities from the appropriate formulas for $V =   4  \cdot 3 \cdot 5^2 \cdot 7^2 \cdot 13 = 191,100$ and for $V = 4 \cdot 2^2 \cdot 5^2 \cdot 11^2 = 48,400$.

\begin{table}[h]
\begin{tabular}{|c|c|c|c|} \hline
Function & Computation & Numerical Answer & Formula Reference \\ \hline
$\sigma(191,100)$ & $\frac{3^2 - 1}{3 - 1}\cdot \frac{5^3 -1}{5 - 1} \cdot \frac{7^3 - 1}{7-1} \cdot \frac{13^2 - 1}{13 - 1}$ & $98,952$ & Equation~\ref{eqn-sigma} \\ \hline
$\delta(191,100)$ & $(2 +1)(1+1)$ & $6$ & Theorem~\ref{thm-delta} \\ \hline
$\alpha(191,100)$ & $\frac{1}{3} \sigma(191,100) + \frac{2}{3} \delta(191,100)$ & $32,988$ & Theorem~\ref{thm-$(3,6)$-fullereneCount} \\ \hline 
$\mu(191,100)$ & $(1+1)(2+1)(2+1)(1+1)$ & $36$ & Proposition~\ref{prop-mu} \\ \hline
$\nu(191,100)$ & 0 & 0 & Proposition~\ref{prop-nu} \\ \hline
$\gamma(191,100)$ & $\frac{1}{6} \sigma(191,100) + \frac{1}{3} \delta(191,100) + \frac{1}{2} \mu(191,100)$ & $16,512$ & Lemma~\ref{lem-$(3,6)$-fullereneGraphCount} \\ \hline
\end{tabular}

\caption{Sample computation for $V =  4  \cdot 3 \cdot 5^2 \cdot 7^2 \cdot 13 = 191,100$ vertices}

\label{table-sample1}
\end{table}

\begin{table}[h]
\begin{tabular}{|c|c|c|c|} \hline
Function & Computation & Numerical Answer & Formula Reference \\ \hline
$\sigma(48,400)$ & $\frac{2^3 - 1}{2 - 1}\cdot \frac{5^3 -1}{5 - 1} \cdot \frac{11^3 - 1}{11-1}$ & $28,861$ & Equation~\ref{eqn-sigma} \\ \hline
$\delta(48,400)$ & $1$ & $1$ & Theorem~\ref{thm-delta} \\ \hline
$\alpha(48,400)$ & $\frac{1}{3} \sigma(48,400) + \frac{2}{3} \delta(48,400)$ & $9,621$ & Theorem~\ref{thm-$(3,6)$-fullereneCount} \\ \hline 
$\mu(48,400)$ & $(2\cdot 2 - 1)(2+1)(2+1)$ & $27$ & Proposition~\ref{prop-mu} \\ \hline
$\nu(48,400)$ & 1 & 1 & Proposition~\ref{prop-nu} \\ \hline
$\gamma(48,400)$ & $\frac{1}{6} \sigma(48,400) + \frac{1}{3} \delta(48,400) + \frac{1}{2} \mu(48,400)$ & $4824$ & Lemma~\ref{lem-$(3,6)$-fullereneGraphCount} \\ \hline
\end{tabular}
\caption{Sample computation for $V = 4 \cdot 2^2 \cdot 5^2 \cdot 11^2=48,400$ vertices}
\label{table-sample2}
\end{table}

Counts of $(3,6)$-fullerenes and $(3,6)$-fullerene graph isomorphism classes for up to 480 vertices are given in Table~\ref{table:bigCounts}. These numbers replicate the counts given in  Table~2 of \cite{green2024polyhedra}. They also agree with the counts from the $N_3$ columns of Table~5 of \cite{deza2005zigzag}, but close attention needs to be paid to the details. The counts in columns $N_3$ in Table~5 of \cite{deza2005zigzag} are of graph isomorphism classes and do not include godseyes, the $(3,6)$-fullerenes with spines of length 0 and signatures of the form $(0, b, 0)$ mentioned in Section~\ref{sec-intro}. Our counts  in the $\gamma(V)$ columns of Table~\ref{table:bigCounts} are also of graph isomorphism classes but do include godseyes. Since there is exactly one godseye for each number of vertices that is a multiple of 4, the counts in the $\gamma(V)$ columns of Table~\ref{table:bigCounts} are exactly one higher than the counts of $N_3$ in Table 5 of \cite{deza2005zigzag}.

\begin{table}
\centering
\begin{tabular}{|c|c|c||c|c|c||c|c|c||c|c|c|c|c|c|}\hline
$V$ & $\alpha(V)$ & $\gamma(V$) & 
$V$ & $\alpha(V)$ & $\gamma(V)$ &
$V$ & $\alpha(V)$ & $\gamma(V)$&
$V$ & $\alpha(V)$ & $\gamma(V)$ \\	\hline
4	&	1	&	1	&	124	&	12	&	7	&	244	&	22	&	12	&	364	&	40	&	22	\\	\hline
8	&	1	&	1	&	128	&	21	&	15	&	248	&	32	&	17	&	368	&	56	&	31	\\	\hline
12	&	2	&	2	&	132	&	16	&	10	&	252	&	36	&	21	&	372	&	44	&	24	\\	\hline
16	&	3	&	3	&	136	&	18	&	10	&	256	&	43	&	27	&	376	&	48	&	25	\\	\hline
20	&	2	&	2	&	140	&	16	&	10	&	260	&	28	&	16	&	380	&	40	&	22	\\	\hline
24	&	4	&	3	&	144	&	31	&	20	&	264	&	48	&	26	&	384	&	84	&	51	\\	\hline
28	&	4	&	3	&	148	&	14	&	8	&	268	&	24	&	13	&	388	&	34	&	18	\\	\hline
32	&	5	&	5	&	152	&	20	&	11	&	272	&	42	&	24	&	392	&	57	&	30	\\	\hline
36	&	5	&	4	&	156	&	20	&	12	&	276	&	32	&	18	&	396	&	52	&	29	\\	\hline
40	&	6	&	4	&	160	&	30	&	20	&	280	&	48	&	26	&	400	&	73	&	41	\\	\hline
44	&	4	&	3	&	164	&	14	&	8	&	284	&	24	&	13	&	404	&	34	&	18	\\	\hline
48	&	10	&	8	&	168	&	32	&	18	&	288	&	65	&	40	&	408	&	72	&	38	\\	\hline
52	&	6	&	4	&	172	&	16	&	9	&	292	&	26	&	14	&	412	&	36	&	19	\\	\hline
56	&	8	&	5	&	176	&	28	&	17	&	296	&	38	&	20	&	416	&	70	&	40	\\	\hline
60	&	8	&	6	&	180	&	26	&	16	&	300	&	42	&	24	&	420	&	64	&	36	\\	\hline
64	&	11	&	9	&	184	&	24	&	13	&	304	&	48	&	27	&	424	&	54	&	28	\\	\hline
68	&	6	&	4	&	188	&	16	&	9	&	308	&	32	&	18	&	428	&	36	&	19	\\	\hline
72	&	13	&	8	&	192	&	42	&	28	&	312	&	56	&	30	&	432	&	94	&	53	\\	\hline
76	&	8	&	5	&	196	&	21	&	12	&	316	&	28	&	15	&	436	&	38	&	20	\\	\hline
80	&	14	&	10	&	200	&	31	&	17	&	320	&	62	&	38	&	440	&	72	&	38	\\	\hline
84	&	12	&	8	&	204	&	24	&	14	&	324	&	41	&	23	&	444	&	52	&	28	\\	\hline
88	&	12	&	7	&	208	&	34	&	20	&	328	&	42	&	22	&	448	&	84	&	49	\\	\hline
92	&	8	&	5	&	212	&	18	&	10	&	332	&	28	&	15	&	452	&	38	&	20	\\	\hline
96	&	20	&	15	&	216	&	40	&	22	&	336	&	76	&	44	&	456	&	80	&	42	\\	\hline
100	&	11	&	7	&	220	&	24	&	14	&	340	&	36	&	20	&	460	&	48	&	26	\\	\hline
104	&	14	&	8	&	224	&	40	&	25	&	344	&	44	&	23	&	464	&	70	&	38	\\	\hline
108	&	14	&	9	&	228	&	28	&	16	&	348	&	40	&	22	&	468	&	62	&	34	\\	\hline
112	&	20	&	13	&	232	&	30	&	16	&	352	&	60	&	35	&	472	&	60	&	31	\\	\hline
116	&	10	&	6	&	236	&	20	&	11	&	356	&	30	&	16	&	476	&	48	&	26	\\	\hline
120	&	24	&	14	&	240	&	56	&	34	&	360	&	78	&	42	&	480	&	120	&	70	\\	\hline
\end{tabular}
    \caption{For each possible number of vertices $V \leq 480$, $\alpha(V)$ is the number of $(3,6)$-fullerenes (where left and right handed versions of chiral $(3,6)$-fullerenes are considered distinct) and $\gamma(V)$ is the number $(3,6)$-fullerene graph isomorphism classes (where left and right handed versions of chiral $(3,6)$-fullerenes are considered the same).}
    \label{table:bigCounts}
\end{table}

\section*{Acknowledgements}
The authors thank the reviewer for excellent suggestions.

\section*{Competing interests}
The authors have no relevant financial or non-financial interests to disclose.

%BIBLIOGRAPHY

\pagebreak

\bibliographystyle{plain}
\bibliography{3-6fullereneExactCount}

@article{green2024polyhedra,
  title={Polyhedra with hexagonal and triangular faces and three faces around each vertex},
  author={Green, Linda and Li, Stellen},
  journal={Annals of Combinatorics},
  pages={1--30},
  year={2024},
  publisher={Springer}, 
DOI = "10.1007/s00026-024-00722-1"
}

@article{deza2005zigzag,
  title= "Zigzag structures of simple two-faced polyhedra",
  author= "Deza, Michel and Dutour, Mathieu",
  journal= "Combinatorics, Probability and Computing",
  volume= "14",
  number= "1-2",
  pages= "31--57",
  year= "2005",
  publisher= "Cambridge University Press",
  DOI = "10.1017/S0963548304006583"
}

@article{grunbaum_motzkin_1963, 
  title= "{The Number of Hexagons and the Simplicity of Geodesics on Certain Polyhedra}", 
  volume= "15", 
  DOI = "10.4153/CJM-1963-071-3", 
  journal= "Canadian Journal of Mathematics", 
  publisher= "Cambridge University Press", 
  author= "Grünbaum, B. and Motzkin, T. S.", 
  year= "1963", 
  pages= "744–751"
  }

@article{Kerekjaro1919,
author = {Kerékjáró, B. von},
journal = {Mathematische Annalen},
pages = {36-38},
title = {Über die periodischen Transformationen der Kreisscheibe und der Kugelfläche. Von B. von Kerékjártó in Ujpest (Ungarn).},
url = {http://eudml.org/doc/158810},
volume = {80},
year = {1919},
}

@article{whitneyCongruentGraphs1932,
 ISSN = {00029327, 10806377},
 URL = {http://www.jstor.org/stable/2371086},
 author = {Hassler Whitney},
 journal = {American Journal of Mathematics},
 number = {1},
 pages = {150--168},
 publisher = {The Johns Hopkins University Press},
 title = {Congruent Graphs and the Connectivity of Graphs},
 urldate = {2026-03-11},
 volume = {54},
 year = {1932}
}

@article{whitneyClassificationOfGraphs1933,
 ISSN = {00029327, 10806377},
 URL = {http://www.jstor.org/stable/2371126},
 author = {Hassler Whitney},
 journal = {American Journal of Mathematics},
 number = {1},
 pages = {236--244},
 publisher = {Johns Hopkins University Press},
 title = {On the Classification of Graphs},
 urldate = {2026-03-11},
 volume = {55},
 year = {1933}
}

@article{JOHN20092663,
title = {Spectra of toroidal graphs},
journal = {Discrete Mathematics},
volume = {309},
number = {9},
pages = {2663-2681},
year = {2009},
issn = {0012-365X},
doi = {https://doi.org/10.1016/j.disc.2008.06.034},
url = {https://www.sciencedirect.com/science/article/pii/S0012365X08004160},
author = {Peter E. John and Horst Sachs},
}

@article{mohar1987enumeration,
  title={The enumeration of akempic triangulations},
  author={Mohar, Bojan},
  journal={Journal of Combinatorial Theory, Series B},
  volume={42},
  number={1},
  pages={14--23},
  year={1987},
  publisher={Elsevier}
}

@inproceedings{fowler19983,
  title={(3-6)-cages, hexagonal toroidal cages, and their spectra.},
  author={Fowler, Patrick W and John, Peter E and Sachs, Horst},
  booktitle={Discrete Mathematical Chemistry},
  pages={139--174},
  year={1998}
}

@article{yang2012hexagonal,
  title={Hexagonal resonance of (3, 6)-fullerenes},
  author={Yang, Rui and Zhang, Heping},
  journal={Journal of Mathematical Chemistry},
  volume={50},
  pages={261--273},
  year={2012},
  publisher={Springer}
}

@article{DEVOS2009358,
title = {Cayley sum graphs and eigenvalues of (3,6)-fullerenes},
author = {Matt DeVos and Luis Goddyn and Bojan Mohar and Robert Samal},
journal = {Journal of Combinatorial Theory, Series B},
volume = {99},
number = {2},
pages = {358-369},
year = {2009},
issn = {0095-8956},
doi = {https://doi.org/10.1016/j.jctb.2008.08.005}
}

@article{brinkmann1997constructive,
  title={A constructive enumeration of fullerenes},
  author={Brinkmann, Gunnar and Dress, Andreas WM},
  journal={Journal of Algorithms},
  volume={23},
  number={2},
  pages={345--358},
  year={1997},
  publisher={Elsevier}
}

@article{brinkmann2003construction,
  title={The construction of cubic and quartic planar maps with prescribed face degrees},
  author={Brinkmann, Gunnar and Harmuth, Thomas and Heidemeier, Oliver},
  journal={Discrete applied mathematics},
  volume={128},
  number={2-3},
  pages={541--554},
  year={2003},
  publisher={Elsevier}
}

@book{brinkmann2007fast,
  title={Fast generation of planar graphs},
  author={Brinkmann, Gunnar and McKay, Brendan and others},
  year={2007},
  publisher={University of Kragujevac}
}

@article{brinkmann2010cage,
  title={CaGe—a virtual environment for studying some special classes of plane graphs—an update},
  author={Brinkmann, Gunnar and Friedrichs, O Delgado and Lisken, Sebastian and Peeters, Adriaan and Van Cleemput, Nicolas},
  journal={MATCH Commun. Math. Comput. Chem},
  volume={63},
  number={3},
  pages={533--552},
  year={2010}
}

@article{brinkmann2012generation,
  title={The generation of fullerenes},
  author={Brinkmann, Gunnar and Goedgebeur, Jan and McKay, Brendan D},
  journal={Journal of chemical information and modeling},
  volume={52},
  number={11},
  pages={2910--2918},
  year={2012},
  publisher={ACS Publications}
}

@article{engel2025exact,
  title={Exact enumeration of fullerenes},
  author={Engel, Philip and Smillie, Peter},
  journal={Duke Mathematical Journal},
  volume={174},
  number={3},
  pages={575--613},
  year={2025},
  publisher={Duke University Press}
}

@mastersthesis{huijben2016tetrahedral,
  title={Tetrahedral triangulations of the sphere},
  author={Huijben, Jeroen},
  type={{B.S.} thesis},
  school={Utrecht University},
  year={2016}
}

\end{document}